\documentclass[11pt]{article}
\usepackage[letterpaper,twoside,outer=1in,vmargin=1in,]{geometry}
\usepackage{setspace}

\usepackage{amsmath,amsfonts,amsthm,url,graphicx,subcaption,float,booktabs,bbm,enumitem}
\usepackage{algorithm,algpseudocode}
\usepackage{tikz,caption,forest}\usepackage{multicol}
\tikzset{
	treenode/.style = {shape=rectangle, rounded corners,
		draw, align=center, 
		minimum height=2ex, text depth=0.25ex,
		top color=white, bottom color=blue!20},
	root/.style     = {treenode, font=\Large\rmfamily, bottom color=red!30},
	env/.style      = {treenode, font=\ttfamily\normalsize},
}
\usetikzlibrary{arrows.meta}
\algnewcommand{\Initialize}[1]{%
	\State \textbf{Initialize:}
	\Statex \hspace*{\algorithmicindent}\parbox[t]{.8\linewidth}{\raggedright #1}
}
\algnewcommand{\algorithmicgoto}{\textbf{go to}}%
\algnewcommand{\Goto}[1]{\algorithmicgoto~step~\ref{#1}}%
\usepackage[normalem]{ulem}

\def \bR {\mathbb{R}}
\def \bZ {\mathbb{Z}}

\def \conv {\text{conv}}

\def \proj {\text{proj}}
\def \dim {\text{dim}}

\def \MIP {\emph{MIP}}
\def \OBJ {\emph{OBJ}}
\def \DWB {\emph{DWB}}
\def \STR {\emph{STR}}
\def \HYB {\emph{HYB}}
\def \DkT#1 {\emph{D}$#1$\emph{T}}
\def \T {\bar{t}}

\newtheorem{thm}{Theorem}

\newtheorem{prop}[thm]{Proposition}
\newtheorem{cor}[thm]{Corollary}

\theoremstyle{plain}
\newtheorem{defn}{Definition}

\newtheorem{exmp}{Example}

\newcommand{\myalpha}{w}

\author{Rui Chen$^1$, Oktay G\"{u}nl\"{u}k$^2$, Andrea Lodi$^1$\\
$^1$ Cornell Tech, Cornell University (\{rui.chen,andrea.lodi\}@cornell.edu)\\
$^2$ School of ORIE, Cornell University (oktay.gunluk@cornell.edu)}
\title{Recovering Dantzig-Wolfe Bounds by Cutting Planes}

\begin{document}
\maketitle

\begin{abstract}
 Dantzig-Wolfe (DW) decomposition is a well-known technique in mixed-integer programming (MIP) for decomposing and convexifying constraints to obtain potentially strong dual bounds. We investigate cutting planes that can be derived using the DW decomposition algorithm and show that these cuts can provide the same dual bounds as DW decomposition. 
More precisely, we generate one cut for each DW block, and when combined with the constraints in the original formulation, these cuts imply the objective function cut one can simply write using the DW bound.
This approach typically leads to a formulation with lower dual degeneracy that consequently has a better computational performance when solved by standard MIP solvers in the original space. We also discuss how to strengthen these cuts to improve the computational performance further. We test our approach on the Multiple Knapsack Assignment Problem and the Temporal Knapsack Problem, and show that the proposed cuts are helpful in accelerating the solution time  without the need to implement branch and price.

\end{abstract}

\section{Introduction}

In this paper, we present  a computationally effective approach for generating cutting planes from Dantzig-Wolfe (DW) decomposition \cite{dantzig1960decomposition} to enhance branch and cut in the space of original variables. We focus on mixed-integer (linear) programs (MIPs) with the following structure: 
\begin{equation}\label{original_MIP}
	\begin{aligned}
		z^*:=\min\ &c^\top x\\
		\text{s.t. }&x_{I(j)}\in P^j, \quad j\in J :=\{1,\ldots,q\},\\
		&Ax\geq b,~x\in X,
	\end{aligned}
\end{equation}
where the index set $I(j)\subseteq\{1,\ldots,n\}$ contains the indices of the variables in ``block" $j\in J$, and we use the notation $x_{I}$ to denote the subvector of $x$ with indices in $I$. 
The set  $P^j=\{y\in\bR^{|I(j)|}:G^jy\geq g^j \}$ is a polyhedral set for $j\in J$; and  $X\subseteq\bR^n$ represents integrality constraints on some of the variables. We do not assume the index sets  $I(j)$ for $j\in J$ to be disjoint, see Figure \ref{fig:the blocks}. 

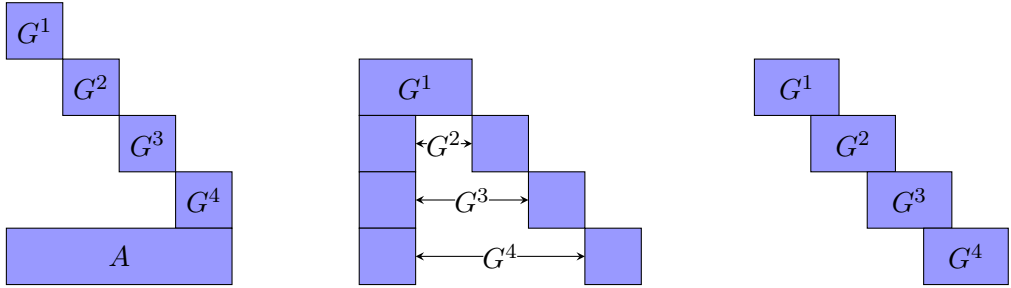
\begin{figure}[tb]
	\centering
	\caption{Constraint Matrices of MIPs With Different Types of Block Structures (Left: Loosely Coupled, Middle: Two-Stage, Right: Overlapping)}
	\begin{subfigure}{0.3\textwidth}
		\centering
		\begin{tikzpicture}[scale=0.75]
			\filldraw[fill=blue!40!white, draw=black] (0,0) rectangle (1,-1);
			\node at (0.5,-0.5) {$G^1$};
			\filldraw[fill=blue!40!white, draw=black] (1,-1) rectangle (2,-2);
			\node at (1.5,-1.5) {$G^2$};
			\filldraw[fill=blue!40!white, draw=black] (2,-2) rectangle (3,-3);
			\node at (2.5,-2.5) {$G^3$};
			\filldraw[fill=blue!40!white, draw=black] (3,-3) rectangle (4,-4);
			\node at (3.5,-3.5) {$G^4$};
			\filldraw[fill=blue!40!white, draw=black] (0,-4) rectangle (4,-5);
			\node at (2,-4.5) {$A$};
		\end{tikzpicture}
	\end{subfigure}
	\begin{subfigure}{0.3\textwidth}
		\centering
		\begin{tikzpicture}[scale=0.75]
			\filldraw[fill=blue!40!white, draw=black] (-1,0) rectangle (1,-1);
			\filldraw[fill=blue!40!white, draw=black] (1,-1) rectangle (2,-2);
			\filldraw[fill=blue!40!white, draw=black] (2,-2) rectangle (3,-3);
			\filldraw[fill=blue!40!white, draw=black] (3,-3) rectangle (4,-4);
			\filldraw[fill=blue!40!white, draw=black] (-1,-1) rectangle (0,-2);
			\filldraw[fill=blue!40!white, draw=black] (-1,-2) rectangle (0,-3);
			\filldraw[fill=blue!40!white, draw=black] (-1,-3) rectangle (0,-4);
			\node at (0,-0.5) {$G^1$};
			\node at (0.5,-1.5) {$G^2$};
			\node at (1,-2.5) {$G^3$};
			\node at (1.5,-3.5) {$G^4$};
			\draw [-stealth](0.2,-1.5) -- (0,-1.5);
			\draw [-stealth](0.8,-1.5) -- (1,-1.5);
			\draw [-stealth](0.7,-2.5) -- (0,-2.5);
			\draw [-stealth](1.3,-2.5) -- (2,-2.5);
			\draw [-stealth](1.2,-3.5) -- (0,-3.5);
			\draw [-stealth](1.8,-3.5) -- (3,-3.5);
		\end{tikzpicture}
	\end{subfigure}
	\begin{subfigure}{0.3\textwidth}
		\centering
		\begin{tikzpicture}[scale=0.75]
			\filldraw[fill=blue!40!white, draw=black] (0,0) rectangle (1.5,-1);
			\filldraw[fill=blue!40!white, draw=black] (1,-1) rectangle (2.5,-2);
			\filldraw[fill=blue!40!white, draw=black] (2,-2) rectangle (3.5,-3);
			\filldraw[fill=blue!40!white, draw=black] (3,-3) rectangle (4.5,-4);
			\node at (0.75,-0.5) {$G^1$};
			\node at (1.75,-1.5) {$G^2$};
			\node at (2.75,-2.5) {$G^3$};
			\node at (3.75,-3.5) {$G^4$};
		\end{tikzpicture}
	\end{subfigure}
	\label{fig:the blocks}
\end{figure}

DW decomposition  was originally developed for solving large-scale linear programs with loosely coupled blocks and later extended to MIPs with similar block structures to obtain strong dual bounds. 
Typically, these so-called DW bounds are stronger than the linear programming (LP) relaxation bounds as they  exploit the block structure to  partially convexify the solution space using integrality information.
DW decomposition has been found to be effective in various applications, such as transportation \cite{lubbecke2003engine}, traffic scheduling \cite{rios2010massively}, energy \cite{anjos2018decentralized}, and multi-stage stochastic planning \cite{singh2009dantzig}.


Computing the DW bound requires reformulating the MIP using the extreme points and extreme rays of the mixed-integer hulls of the block problems. 
Consequently, while the DW reformulation approach often leads to good dual bounds, using this to solve the MIP exactly requires specialized techniques that are not readily present in off-the-shelf solvers.
In other words, to  exploit DW decomposition one needs to solve the continuous relaxation of the reformulated MIP by column generation followed by an \emph{ad-hoc} branching step \cite{vanderbeck2000dantzig}, after which one has to again resort to column generation, leading to an algorithmic framework called branch and price \cite{barnhart1998branch}. 
While this approach has been successfully implemented in some special cases, most notably  for vehicle routing problems \cite{vrpsolver-pessoa}, generic branch-and-price solvers, including ABACUS \cite{junger2000abacus}, G12 \cite{puchinger2008high}, GCG \cite{gamrath2010experiments}, DIP \cite{galati2012computational}, BaPCod \cite{sadykov2021bapcod}, to name a few, are still in their infancy with respect to solving general MIPs with block structure.
However, the most up-to-date (and always improving) MIP solvers are based on the  branch-and-cut (or cut-and-branch) scheme \cite{lodi2010mixed}.

%

In this paper, we aim to make a step towards bridging this gap by developing a new scheme to incorporate the  dual bounds produced by DW reformulations into the standard branch-and-cut framework. More precisely, we first compute the  DW reformulation bound $z_D$ of the  MIP \eqref{original_MIP} at the root node and then generate cutting planes to incorporate this bound into the original formulation to solve the problem to optimality in the space of the original variables using standard MIP technology. Our approach, therefore, requires column generation only at the root node and not throughout the enumeration tree.
Apart from our proposed approach, one trivial method to enforce the DW bound $z_D$ in the original MIP is to augment the formulation by adding the \emph{objective function cut}, $c^\top x \geq z_D,	$
which is known (folklore) to be not only computationally ineffective but also numerically unstable. To the best of our knowledge, however, no detailed theoretical or computational investigation has been conducted on the objective function cut. 
Earlier cutting plane approaches include \cite{ralphs2003capacitated,ralphs2005decomposition,avella2010computational}, where valid inequalities are added iteratively to separate candidate solutions from the DW relaxation polytope. Since we only add one round of valid inequalities, our approach tends to be less computational intensive and leads to a less complex MIP formulation in the end.


\paragraph{Paper Contributions.} Our first contribution is to confirm the instability of using the objective cut by mostly attributing it to dual degeneracy. We then show how to overcome this issue by using a family of cutting planes that essentially decompose the objective function cut. This leads to a formulation with lower dual degeneracy, and consequently a better computational performance when solved by standard MIP solvers in the original space. Moreover, we propose two distinct ways to strengthen these cuts to improve their computational performance further.
As a case study, we test our approach on the Multiple Knapsack Assignment Problem and the Temporal Knapsack Problem to show that the proposed cuts are helpful in accelerating the solution time without the need to implement branch and price. 
Finally, we consider a standard multi-thread computational environment and present a simple machine learning approach to identify problem instances when our cutting plane approach has the potential to improve computation time.
This framework is simple and general, holding the promise of being implementable in a relatively easy way in general-purpose MIP solvers.

\paragraph{Paper Organization.} The remainder of the paper is organized as follows. In Section \ref{sec:preliminary}, we provide some preliminaries on DW decomposition and introduce what we call DW Block cuts. In Section \ref{sec:DWintoBnC}, we propose our new approach for recovering the DW bound in the original space using these cuts and discuss how they relate to the objective function cut. In Section \ref{sec:Lag}, we discuss Lagrangian relaxation as an alternative approach for computing the DW bound and generating cutting planes. Some techniques for strengthening the proposed cuts are presented in Section \ref{sec:Strengthening}. Section \ref{sec:MKAP} reports our computational investigation while Section \ref{sec:hybrid} proposes and tests the multi-thread hybrid algorithm. Finally, some short conclusions are drawn in Section \ref{sec:end}.

\section{Preliminaries}
\label{sec:preliminary}
We assume MIP \eqref{original_MIP} is feasible.
Throughout the paper, we call formulation \eqref{original_MIP} the original formulation of the MIP and call constraints $Ax\geq b$ (potentially empty) linking constraints. In the context of Dantzig-Wolfe decomposition, 
\eqref{original_MIP} is sometimes called the compact formulation. 
MIPs of this form 
with 
disjoint index sets $I(j)$ are called loosely coupled MIPs \cite{bodur2022decomposition}.
Note that we do not assume the original MIP \eqref{original_MIP} is loosely coupled and as such the supports of different blocks can have overlaps (e.g., in MIPs with two-stage \cite{ahmed2010two} or overlapping \cite{bartlett2005temporal} block structures).
We call the LP relaxation of \eqref{original_MIP}, obtained by dropping the integrality constraints $x\in X$, the natural LP relaxation, and denote its optimal objective value by $z_L$. 
Throughout, we assume that all data is rational.

\subsection{Dantzig-Wolfe Decomposition}\label{subsec:DWdecomp}
We next consider replacing the constraints $x_{I(j)}\in P^j$ in \eqref{original_MIP} by $x_{I(j)}\in \conv(Q^j)$, where 
$$ Q^j=\{y\in\bR^{|I(j)|}:G^jy\geq g^j,y\in X^j \},$$
and  $X^j$ has the integrality constraints inherited from $X$ for the variables $x_{I(j)}$.
Then, one obtains the  DW reformulation of problem \eqref{original_MIP}.
Relaxing the integrality constraints $x\in X$ in this formulation leads to the DW relaxation of \eqref{original_MIP}:
\begin{equation}\label{DW_reform}
	\begin{aligned}
		z_D:=\min\ &c^\top x\\
		\text{s.t. }&x_{I(j)}\in \conv(Q^j), \quad j\in J,\\
		&Ax\geq b.
	\end{aligned}
\end{equation}
The DW bound $z_D$ is potentially stronger than the natural LP relaxation bound $z_L$ as $\conv(Q^j)\subseteq P^j$ for all $j\in\{1,\ldots,q\}$. 
Consequently, we have $z^* \ge  z_{D}\ge z_{L}.$
In practice, computing $z_D$ is not a straightforward task as 
polyhedra $\big(\conv(Q^j)\big)_{j=1}^q$ are not given explicitly. 
For $j\in J$, let $V^j$ and $R^j$ denote the set of extreme points and the set of extreme rays of $\conv(Q^j)$, respectively.
The DW relaxation  \eqref{DW_reform}  can be reformulated using $(V^j)_{j\in J}$ and $(R^j)_{j\in J},$ leading to the following extended formulation: 
\begin{subequations}\label{DW_relaxation}
	\begin{alignat}{2}
		z_D=\min\ &c^\top x\\
		\text{s.t. }&x_{I(j)}=\sum_{v\in V^j}\lambda_vv+\sum_{r\in R^j}\mu_rr,~~ &j\in J,\label{eqn:xI}\\
		&\sum_{v\in V^j}\lambda_v=1,&j\in J,\label{DW_relaxation_c}\\
		&\lambda\geq 0,~\mu\geq 0,\\
		&Ax\geq b.
	\end{alignat}
\end{subequations}	

This formulation can now be solved iteratively via column generation. 
Specifically, at each iteration, a {\em restricted} LP is solved by replacing $V^j$ and $R^j$ with a small collection of extreme points $\hat{V}^j\subseteq V^j$ and extreme rays $\hat{R}^j\subseteq R^j$. In each iteration, a new extreme point or a new extreme ray  is generated by solving a {\em pricing} subproblem 
\begin{equation}\label{DWpricing}
	D_j(\pi^{j}):=\min\Big\{(\pi^{j})^\top v:v\in \conv(Q^j) \Big\}=\min\Big\{(\pi^{j})^\top v:v\in Q^j \Big\}
\end{equation}
for each block $j\in J$, where $\pi^{j}$ above is the dual solution associated with constraints \eqref{eqn:xI} in the restricted LP relaxation of \eqref{DW_relaxation}. By convention, we define $D_j(\pi^j)=-\infty$ if the pricing subproblem \eqref{DWpricing} is unbounded, and when  this happens one can obtain an extreme ray $r\in R^j$ by finding an unbounded ray of $\min\{(\pi^j)^\top v:v\in P^j\}$, which is added to $\hat R^j$. 
If, on the other hand, $D_j(\pi^j)$ is finite, one obtains a solution of \eqref{DWpricing} as an extreme point $v\in V^j$. This point is added  to $\hat V^j$ provided that it has a negative reduced cost that is computed by subtracting the dual variable associated with the $j$-th constraint of \eqref{DW_relaxation_c} from $D_j(\pi^{j})$.
The restricted LP, with augmented vertices  and rays, is then solved again and this process is repeated until no such points or rays are generated.

The restricted LP at each iteration gives an {\em upper bound} for $z_D$, and these upper bounds converge to $z_D$ in a finite number of iterations as $|V^j|$ and $|R^j|$ are finite for all $j\in J$. 
When the algorithm terminates, in addition to the lower bound $z_D$, an optimal solution to  \eqref{DW_reform} is obtained. 
If this solution does not  satisfy the integrality constraints $x\in X$, branching is necessary to obtain an optimal solution of the original problem. 
The subproblems in the branch-and-bound tree can again be solved using column generation, leading to a 
branch-and-price procedure.

\subsection{Dantzig-Wolfe Block Cuts}\label{subsec:DWB_cuts}\label{subsec:InnerAndOuter}
Note that while solving the DW relaxation, the pricing subproblems \eqref{DWpricing} can also be used to derive valid inequalities for \eqref{DW_reform}. 
Specifically, for each $j\in\{1,\ldots,q\}$ any inequality of the form $\pi^\top y\geq D_j(\pi)$ is valid for $\conv(Q^j)$   for   $\pi\in\bR^{I(j)}$. 
In this paper, we call these inequalities DW Block cuts.

%

\begin{defn}
	An inequality is called a \emph{Dantzig-Wolfe Block (DWB) cut} for \eqref{original_MIP} if it is of the form\begin{equation}\label{DWcuts}
		\pi^\top x_{I(j)}\geq D_j(\pi)
	\end{equation}
	for some $j\in\{1,\ldots,q\}$ and  $\pi\in\bR^{I(j)}$, where $D_j(\cdot)$ is defined in \eqref{DWpricing}.
\end{defn}
As DWB cuts are essentially valid inequalities for the block polyhedra $\big(\conv(Q^j)\big)_{j=1}^q$, they can be viewed as special cases of a broader class of cutting planes called Fenchel cuts \cite{boyd1994fenchel}.


Adding some of these DWB cuts to the  natural LP relaxation of \eqref{original_MIP} can lead to a stronger formulation and therefore a better dual bound than $z_L$.
Let $S$ denote the feasible region of the original problem \eqref{original_MIP}. We next present a relationship between the dimension of a DWB cut in $\conv(S)$ and its restriction in $\conv(Q^j)$.

\begin{defn}
	We say that MIP \eqref{original_MIP} has \emph{block relative feasibility} if, for each $j\in\{1,\ldots,q\}$ and $y\in Q^j$, there exists $x\in S$ such that $x_{I(j)}=y$, i.e., $\proj_{x_{I(j)}}(S)=Q^j$ for $j\in J$.
\end{defn}
The block relative feasibility assumption holds for a broad class of MIP problems with decomposable structures, including two-stage stochastic integer programs with relatively complete recourse \cite{shapiro2021lectures}.
\begin{prop}\label{prop:dim_of_face}
	Assume problem \eqref{original_MIP} has block relative feasibility. If $(\pi^j)^\top y\geq D_j(\pi^j)$ defines a $d$-dimensional face of $\conv(Q^j)$ for some $j\in\{1,\ldots,q\}$, then \eqref{DWcuts} defines a face of $\conv(S)$ of dimension at least $d$. 
\end{prop}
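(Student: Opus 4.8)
The plan is to relate the face of $\conv(S)$ cut out by \eqref{DWcuts} to the face $F^j:=\{y\in\conv(Q^j):(\pi^j)^\top y=D_j(\pi^j)\}$ of $\conv(Q^j)$ through the coordinate projection $\proj_{x_{I(j)}}$, and then exploit the fact that a linear map cannot increase dimension. Since $\proj_{x_{I(j)}}(F^j)$ is $d$-dimensional, I only need to recover it as the image of a face of $\conv(S)$.

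First I would record that the DWB cut \eqref{DWcuts} is valid for $\conv(S)$: every $x\in S$ satisfies $x_{I(j)}\in Q^j$, hence $(\pi^j)^\top x_{I(j)}\geq D_j(\pi^j)$, and validity extends to $\conv(S)$ by convexity. Write $F:=\{x\in\conv(S):(\pi^j)^\top x_{I(j)}=D_j(\pi^j)\}$ for the face it defines. The crucial step is to show $\proj_{x_{I(j)}}(F)=F^j$. Because coordinate projection is a linear map, it commutes with taking convex hulls, so the block relative feasibility identity $\proj_{x_{I(j)}}(S)=Q^j$ upgrades to $\proj_{x_{I(j)}}(\conv(S))=\conv(\proj_{x_{I(j)}}(S))=\conv(Q^j)$. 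The inclusion $\proj_{x_{I(j)}}(F)\subseteq F^j$ is then immediate: for $x\in F$ its projection lies in $\conv(Q^j)$ and satisfies the defining equation. For the reverse inclusion, take $y\in F^j\subseteq\conv(Q^j)=\proj_{x_{I(j)}}(\conv(S))$; there is $x\in\conv(S)$ with $x_{I(j)}=y$, and since $(\pi^j)^\top x_{I(j)}=(\pi^j)^\top y=D_j(\pi^j)$ this $x$ lies in $F$, giving $y\in\proj_{x_{I(j)}}(F)$. In particular $F$ is nonempty because $F^j$ is.

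Finally I would conclude via the dimension bound for linear images. Since $\proj_{x_{I(j)}}$ is linear, $\aff\big(\proj_{x_{I(j)}}(F)\big)=\proj_{x_{I(j)}}\big(\aff(F)\big)$, and the image of an affine subspace under a linear map has no larger dimension than the subspace itself; hence
$$ d=\dim(F^j)=\dim\big(\proj_{x_{I(j)}}(F)\big)\leq\dim(F), $$
which is exactly the claim.

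The only point requiring genuine care is the commutation $\proj_{x_{I(j)}}(\conv(S))=\conv(Q^j)$, and this is precisely where block relative feasibility is needed. Without it one only has $\proj_{x_{I(j)}}(\conv(S))\subseteq\conv(Q^j)$, so a point $y\in F^j$ need not lift to a point of $F$; the reverse inclusion $F^j\subseteq\proj_{x_{I(j)}}(F)$ — and therefore the dimension lower bound — can then fail. I expect this lifting of faces of the block hull back into $\conv(S)$ to be the main obstacle, and it is resolved entirely by the hypothesis.
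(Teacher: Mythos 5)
Your proof is correct, and it takes a genuinely different route from the paper's. The paper argues at the level of points: a $d$-dimensional face of $\conv(Q^j)$ contains $d+1$ affinely independent points of $Q^j$ itself; block relative feasibility lifts each of these to a point of $S$; the lifted points are affinely independent because their projections are; and they all lie on the face of $\conv(S)$ defined by \eqref{DWcuts}, which therefore has dimension at least $d$. You argue at the level of sets: writing $F^j$ for the face of $\conv(Q^j)$ and $F$ for the face of $\conv(S)$, you upgrade block relative feasibility to its convexified form $\proj_{x_{I(j)}}(\conv(S))=\conv(Q^j)$ via the commutation of linear maps with convex hulls, establish the exact identity $\proj_{x_{I(j)}}(F)=F^j$, and conclude with the fact that a linear image of an affine set has no larger dimension. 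Both proofs hinge on the same lifting supplied by block relative feasibility, but they buy different things. Your version yields a slightly stronger structural conclusion ($F$ projects exactly onto $F^j$, not merely that it contains enough affinely independent points) and lifts arbitrary, possibly fractional, points of $F^j$, which is exactly why you need the conv-projection commutation step. The paper's version is more elementary in that it lifts only points of $Q^j$, which is verbatim what the assumption provides, though it implicitly relies on the standard fact that a $d$-dimensional face of $\conv(Q^j)$ contains $d+1$ affinely independent points of $Q^j$ (a face of a convex hull is the convex hull of the generating points lying on it) --- a fact your argument sidesteps entirely.
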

\begin{proof}
Since $(\pi^j)^\top y\geq D_j(\pi^j)$ defines a $d$-dimensional face of $\conv(Q^j)$, there exists $d+1$ affinely independent points $\{y^k\}_{k=1}^{d+1}\subseteq Q^j$. By the block relative feasibility assumption, there exist points $\{x^k\}_{k=1}^{d+1}\subseteq S$ such that $x^k_{I(j)}=y^k$. Points $\{x^k\}_{k=1}^{d+1}$ are affinely independent from each other by affine independence of $\{y^k\}_{k=1}^{d+1}$. The conclusion then follows from the fact that $\{x^k\}_{k=1}^{d+1}$ are all on the face associated with \eqref{DWcuts} in $\conv(S)$.
\end{proof}
Proposition \ref{prop:dim_of_face} indicates that DWB cuts whose restrictions in the space of $x_{I(j)}$ correspond to high-dimensional faces of $\conv(Q^j)$ are likely to define high-dimensional faces in $\conv(S)$. This motivates the idea of strengthening some DWB cuts to obtain higher-dimensional DWB cuts, which will be discussed in Section \ref{sec:Strengthening}. We next investigate how to generate critical DWB cuts for obtaining strong dual bounds.

\section{Incorporating the DW Bound Into the Formulation}\label{sec:DWintoBnC}


In a number of applications, it has been shown that the DW bound can be significantly stronger than the natural LP relaxation bound  of \eqref{original_MIP} \cite{savelsbergh1997branch,ceselli2005branch,caprara2013uncommon}. 
However, even if this bound 
can be  effectively computed, enforcing the integrality constraints $x\in X$ requires a specialized 
branch-and-price algorithm. A straightforward approach to recover the DW bound $z_D$ in the original space is to  add  a single cut  $c^\top x\geq z_D,$ to the LP relaxation of \eqref{original_MIP} which we call the \emph{objective function cut}. However, it is well known that adding such an objective function cut often slows down MIP solvers in practice. 

We next observe a basic property of the optimal face of the LP relaxation after adding the objective function cut.
\begin{prop}\label{prop:obj_cut}
	Let $P$ be a polyhedron in $\bR^n$. If neither $c^\top x\leq v$ nor $c^\top x\geq v$ is valid for $P$, then $\dim(P\cap\{x:c^\top x=v\})=\text{dim}(P)-1$.
\end{prop}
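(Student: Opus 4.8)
The plan is to set $d=\dim(P)$ and $H=\{x:c^\top x=v\}$, and to establish the two inequalities $\dim(P\cap H)\le d-1$ and $\dim(P\cap H)\ge d-1$ separately. The first move is to reinterpret the hypothesis: the failure of $c^\top x\le v$ to be valid for $P$ gives a point $x^+\in P$ with $c^\top x^+>v$, and the failure of $c^\top x\ge v$ gives $x^-\in P$ with $c^\top x^-<v$. The crucial consequence is that the affine function $x\mapsto c^\top x$ is \emph{not} constant on $\aff(P)$, since a constant value $\alpha$ would force one of the two inequalities to hold on all of $P$ ($\le v$ if $\alpha\le v$, and $\ge v$ otherwise), contradicting the hypothesis.

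For the upper bound I would note $P\cap H\subseteq \aff(P)\cap H$. Because $c^\top$ restricted to the $d$-dimensional affine set $\aff(P)$ is a non-constant affine function, $\aff(P)$ is neither contained in $H$ nor parallel to it, so $\aff(P)\cap H$ is an affine subspace of dimension exactly $d-1$; hence $\dim(P\cap H)\le d-1$. The slice is also nonempty: by convexity the segment $[x^-,x^+]\subseteq P$, and applying the intermediate value property to $t\mapsto c^\top\big((1-t)x^-+t x^+\big)$ produces a point of $P$ on $H$.

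The substance of the argument is the lower bound $\dim(P\cap H)\ge d-1$, which I would prove by exhibiting $d$ affinely independent points of $P\cap H$. The key step is to locate a point $\hat x\in\text{relint}(P)$ with $c^\top\hat x=v$. This follows because $c^\top(\text{relint}(P))$ is the relative interior of the interval $c^\top(P)$ (relative interior commutes with linear maps), and $v$ lies strictly inside that interval since $c^\top x^-<v<c^\top x^+$ with $c^\top x^\pm\in c^\top(P)$. Writing $L_0$ for the ($d$-dimensional) direction space of $\aff(P)$, the restriction of $c^\top$ to $L_0$ is a nonzero linear functional, so its kernel $\{u\in L_0:c^\top u=0\}$ has dimension $d-1$; let $w^1,\dots,w^{d-1}$ be a basis. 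Since $\hat x\in\text{relint}(P)$, for sufficiently small $\epsilon>0$ the points $\hat x+\epsilon w^i$ stay in $P$, and they lie on $H$ because $c^\top w^i=0$. The $d$ points $\hat x,\hat x+\epsilon w^1,\dots,\hat x+\epsilon w^{d-1}$ are affinely independent and belong to $P\cap H$, giving $\dim(P\cap H)\ge d-1$; combining the two bounds yields the claim.

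I expect the main obstacle to be this lower bound, and specifically the production of a relative-interior point of $P$ lying on $H$: the perturbed points $\hat x+\epsilon w^i$ remain feasible only because $\hat x$ was chosen in $\text{relint}(P)$ rather than merely in $P\cap H$, so choosing an arbitrary crossing point of the segment $[x^-,x^+]$ would not suffice. Everything else reduces to the single observation that the hypothesis is exactly the assertion that $c^\top$ is non-constant on $\aff(P)$, after which the standard dimension count for a hyperplane slice of an affine subspace finishes the proof.
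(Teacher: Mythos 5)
Your proof is correct, but it follows a genuinely different route from the paper's. The paper forms $P^+:=P\cap\{x:c^\top x\leq v\}$, invokes two cited results from Conforti--Cornu\'ejols--Zambelli --- the irredundancy lemma to get $\dim(P\cap\{x:c^\top x=v\})=\dim(P^+)-1$, and the implicit-equality characterization of affine hulls to show $\aff(P^+)=\aff(P)$ --- where the verification of the affine-hull equality uses the same small convex-combination perturbation $(1-\lambda)\hat x+\lambda\bar x$ that appears in your relint argument in a different guise. You instead do a direct dimension count: the upper bound comes from slicing $\aff(P)$ by the hyperplane $H$ (justified by the key observation that the hypothesis is exactly non-constancy of $c^\top$ on $\aff(P)$), and the lower bound comes from exhibiting $d$ affinely independent points of $P\cap H$, built from a relative-interior point $\hat x$ of $P$ lying on $H$ (via $\mathrm{relint}(c^\top(P))=c^\top(\mathrm{relint}(P))$) perturbed along a basis of $\ker(c^\top)\cap L_0$. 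Your correctly flagged subtlety --- that $\hat x$ must be in $\mathrm{relint}(P)$, not merely an arbitrary crossing point of the segment $[x^-,x^+]$ --- is precisely what makes the construction work. What each approach buys: the paper's proof is shorter given the textbook machinery and stays entirely within standard polyhedral bookkeeping; yours is self-contained (modulo the standard fact that relative interiors commute with affine images) and, notably, never uses polyhedrality at all, so it proves the statement for an arbitrary convex set $P$, which the paper's irredundant-inequality argument does not directly do.
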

\begin{proof}
Define $P^+:=P\cap\{x:c^\top x\leq v\}$. Note that $c^\top x\leq v$ is an irredundant inequality for $P^+$ because $c^\top x\leq v$ is not valid for $P$. By \cite[Lemma 3.26]{conforti2014integer}, $\dim(P\cap\{x:c^\top x=v\})=\dim(P^+)-1$. We next show that the affine hull of $P^+$ is equal to the affine hull of $P$, and thus $\dim(P^+)=\dim(P)$. By \cite[Theorem 3.17]{conforti2014integer}, we only need to show the following two statements:\begin{enumerate}
	\item Equality $c^\top x=v$ is not valid for $P^+$;
	\item If $a^\top x\leq a_0$ is valid for $P$ but $a^\top x=a_0$ is not valid for $P$, then $a^\top x=a_0$ is not valid for $P^+$.
\end{enumerate} 
Note that $c^\top x\geq v$ is not valid for $P$. Therefore, there exists $\hat{x}\in P$ such that $c^\top \hat{x}<v$. The first statement then follows from the fact that $\hat{x}\in P^+$. Similarly, there exists $\bar{x}\in P$ such that $a^\top \bar{x}< a_0$. For $\lambda\in(0,1)$, define $x^{\lambda}:=(1-\lambda)\hat{x}+\lambda\bar{x}$. Because $a^\top \bar{x}\leq a_0$, we have $x^{\lambda}\in P^+$ but $a^\top x^{\lambda}<a_0$ for a small enough $\lambda$. The second statement then follows.
\end{proof}

Therefore, if $z_D>z_L$, adding the objective function cut to the original formulation would often create an optimal face almost as high-dimensional as the original LP relaxation polyhedron.
This, in turn, can cause not only performance variability \cite{lodi2013performance}, but also serious computational issues especially in early stages of the branch and cut in terms of branching \cite{gamrath2020exploratory}, as well as cutting plane generation.

\subsection{An alternative approach}
In Section \ref{subsec:InnerAndOuter}, we observed that valid inequalities for the DW relaxation can be generated while solving the pricing subproblems.
We next show that using a small number of such cuts 
can readily recover $z_D$. 
Assume that at iteration $t$ of DW decomposition, the restricted LP is of the form
\begin{equation}\label{eqn:restrictedLP}
	\begin{alignedat}{3}
		{z}_D^t=\min\ & c^\top x\\
		\text{s.t. }&x_{I(j)}=\sum_{v\in \hat{V}^j}v\lambda^j_v+\sum_{r\in\hat{R}^j}r\mu_r^j,\quad &&j\in J,\quad&&(\pi^{j,t})\\
		&Ax\geq b,\quad&&&&(\beta^t)\\
		&\sum_{v\in\hat{V}^j }\lambda^j_v=1,\quad &&j\in J,\quad&&(\theta_{j}^{t})\\
		&\lambda^j\geq 0,~\mu^j\geq 0,\quad&& j\in J,
	\end{alignedat}
\end{equation}
and let $(\pi^{1,t},\ldots,\pi^{q,t},\beta^t,\theta^{t}_1,\ldots,\theta^{t}_q)$ be the optimal dual solution 
associated with this restricted LP. We then have the following result for DWB cuts derived from the optimal dual solution of the last iteration of DW decomposition.
\begin{thm}\label{thm:DWBlast_iter}
	Assume DW decomposition terminates in $\T$ iterations. Then,\begin{subequations}\label{DWBLP}
		\begin{align}
			z_D=\min\ &c^\top x\label{DWBobj}\\
			\text{s.t. }&(\pi^{j,\T})^\top x_{I(j)}\geq D_j(\pi^{j,\T}),\quad j\in J,\label{DWB:lastIter}\\
			&Ax\geq b.\label{con:linking}
		\end{align}
	\end{subequations}
\end{thm}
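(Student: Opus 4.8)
The plan is to prove the two inequalities $z_{DWB} \le z_D$ and $z_{DWB} \ge z_D$ separately, where $z_{DWB}$ denotes the optimal value of \eqref{DWBLP}. The first inequality is immediate: each DWB cut \eqref{DWB:lastIter} is valid for $\conv(Q^j)$, so every $x$ that is feasible for the DW relaxation \eqref{DW_reform} (i.e., with $x_{I(j)}\in\conv(Q^j)$ for all $j$ and $Ax\ge b$) also satisfies the constraints of \eqref{DWBLP}. Minimizing $c^\top x$ over the larger feasible set of \eqref{DWBLP} can only decrease the optimal value, so $z_{DWB}\le z_D$.

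The substance is the reverse inequality $z_{DWB}\ge z_D$, which I would obtain from LP duality by exhibiting a dual-feasible solution to \eqref{DWBLP} whose objective is at least $z_D$. First I would record what termination of column generation yields. Writing the dual of the restricted LP \eqref{eqn:restrictedLP}, the stationarity condition for the (unconstrained) variables $x$ reads
\begin{equation}\label{eq:stat}
\sum_{j:\, i\in I(j)} \pi^{j,\T}_i + (A^\top \beta^{\T})_i = c_i \quad \text{for all } i,
\end{equation}
and the stopping rule (no column prices out) means every extreme ray $r\in R^j$ has $(\pi^{j,\T})^\top r\ge 0$ and every extreme point $v\in V^j$ has nonnegative reduced cost, so that $\theta^{\T}_j \le \min_{v\in V^j}(\pi^{j,\T})^\top v = D_j(\pi^{j,\T})$, the last equality using that the pricing subproblem is bounded at termination (hence $D_j(\pi^{j,\T})$ is finite and attained at a vertex). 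Strong duality for the final restricted LP, together with $z_D^{\T}=z_D$, also gives $(\beta^{\T})^\top b + \sum_{j\in J}\theta^{\T}_j = z_D$.

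Next I would dualize \eqref{DWBLP}: assigning multipliers $\alpha_j\ge 0$ to the DWB cuts \eqref{DWB:lastIter} and $\gamma\ge 0$ to the linking constraints \eqref{con:linking}, the dual is $\max\big\{\sum_{j\in J}\alpha_j D_j(\pi^{j,\T})+\gamma^\top b : \sum_{j:\, i\in I(j)}\alpha_j\pi^{j,\T}_i+(A^\top\gamma)_i=c_i\ \forall i,\ \alpha,\gamma\ge 0\big\}$. The key observation is that the choice $\alpha_j=1$ for all $j$ and $\gamma=\beta^{\T}$ makes this dual constraint coincide exactly with the stationarity identity \eqref{eq:stat}, so the point is dual-feasible. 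Its objective value is $\sum_{j\in J}D_j(\pi^{j,\T})+(\beta^{\T})^\top b \ge \sum_{j\in J}\theta^{\T}_j+(\beta^{\T})^\top b = z_D$, where the inequality uses $D_j(\pi^{j,\T})\ge\theta^{\T}_j$. By weak duality $z_{DWB}$ is at least this dual objective, so $z_{DWB}\ge z_D$, and combining with the first bound gives $z_{DWB}=z_D$.

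I expect the main obstacle to be purely bookkeeping: fixing the sign conventions of the restricted-LP dual so that \eqref{eq:stat} lines up coefficient-by-coefficient with the dual-feasibility condition of \eqref{DWBLP} (this is what lets the single choice $\alpha_j=1,\ \gamma=\beta^{\T}$ work), and carefully justifying $\theta^{\T}_j\le D_j(\pi^{j,\T})$ from the column-generation stopping rule, including boundedness of the pricing problem at termination. Once that identification is in place, the conclusion is a one-line weak-duality estimate.
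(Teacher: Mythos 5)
Your proof is correct and is essentially the paper's own argument: the easy direction follows from validity of the cuts, and the reverse direction combines the dual stationarity condition, strong duality of the final restricted LP, and the termination condition $D_j(\pi^{j,\T})\geq\theta^{\T}_j$, exactly as in the paper. The only difference is presentational: you exhibit $(\alpha,\gamma)=(\mathbf{1},\beta^{\T})$ as a dual-feasible point of \eqref{DWBLP} and invoke weak duality, whereas the paper carries out the same multiplier computation inline on an arbitrary feasible $x$ (and indeed later observes, in the proof of Proposition \ref{Prop:DWBace_dim}, that $(1,\ldots,1,\beta^{\T})$ is a dual optimal solution of \eqref{DWBLP}).
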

\begin{proof}
The ``$\geq$" direction of \eqref{DWBobj} is implied by the definition of $z_D$ in \eqref{DW_reform} as inequality \eqref{DWB:lastIter} is valid for $\conv(Q^j)$ for $j\in J$. We next show the ``$\leq$" direction.
Based on LP duality of \eqref{eqn:restrictedLP} at iteration $\T$ and the termination condition of DW decomposition, the following equalities hold:\begin{enumerate}
	\item $z_D=b^\top \beta^{\T}+\sum_{j=1}^q\theta^{\T}_j$;
	\item $c_i=A_i^\top \beta^{\T}+\sum_{j:i\in I(j)}\pi_i^{j,\T}$, $i=1,\ldots,n$.
\end{enumerate}
Note that at the last iteration $\T$, the DW pricing subproblems are bounded. Let $(v^{j,\T})_{j=1}^q$ denote the solutions of the DW pricing subproblems at iteration $\T$.
Note that the reduced costs associated with points $(v^{j,\T})_{j=1}^q$ are nonnegative at iteration $\T$ of DW decomposition, i.e., $(\pi^{j,\T})^\top v^{j,\T}-\theta^{\T}_j=D_j(\pi^{j,\T})-\theta^{\T}_j\geq 0$ for $j\in J$. Therefore, for each solution $x$ satisfying \eqref{DWB:lastIter} and \eqref{con:linking}, we have the following inequality:\begin{multline}\label{eqn:LIDWB_dualbound}
	c^\top x=\sum_{i=1}^nc_ix_i=\sum_{i=1}^n\Big[x_iA_i^\top \beta^{\T}+\sum_{j:i\in I(j)}x_i\pi_i^{j,\T}\Big]={\underbrace{(\beta^{\T})}_{\geq 0}}^\top \underbrace{A x}_{\geq b}+\sum_{j=1}^q\underbrace{(\pi^{j,\T})^\top x_{I(j)}}_{\geq D_j(\pi^{j,\T})}\\
	\geq b^\top \beta^{\T}+\sum_{j=1}^q D_j(\pi^{j,\T})\geq b^\top \beta^{\T}+\sum_{j=1}^q\theta^{\T}_j=z_D.
\end{multline}
\end{proof}
We call inequalities \eqref{DWB:lastIter} \emph{last-iteration DWB cuts}. Theorem \ref{thm:DWBlast_iter} shows that $q$ last-iteration DWB cuts together with linking constraints recover the DW bound $z_D$. We remark that the last-iteration DWB cuts are not necessarily all nontrivial. It is possible that $\pi^{j,\T}=0$ for some $j\in J$, which implies that the convexification of the $j$-th block has no impact on improving the dual bound.

It is also worth emphasizing that \eqref{DWBLP} is not a valid formulation for the MIP \eqref{original_MIP} even if we add integrality constraints $x\in X$ to it. 
One should use last-iteration DWB cuts as cutting planes and add them to the original formulation \eqref{original_MIP} to obtain a valid formulation whose LP relaxation bound is precisely $z_D$.
Also note that Theorem \ref{thm:DWBlast_iter} does not imply that last-iteration DWB cuts dominate other DWB cuts that can be generated at intermediate iterations $\tau<\T$, in the sense that intermediate-iteration DW cuts may still cut off fractional points that do not violate any of the last-iteration DWB cuts.

\subsection{Dual Degeneracy and LP Optimal Face}\label{sec:dual_deg}
When comparing the strength of different collections of cutting planes or different formulations, very often the LP relaxation bound is used as the sole criterion. However, the effectiveness of two formulations in branch and cut may differ significantly even when they have very similar (or, the same) LP relaxation bounds.  An additional property that should also be taken into account is the dual degeneracy  of the LP relaxation of the formulation \cite{gamrath2020exploratory}.
A dual basic solution of an LP is called \emph{dual degenerate} if at least one of the dual basic variables is set to 0 in that solution.
Next, we formally define the degeneracy level of a dual basic solution of an LP (given in inequality form).

\begin{defn} Consider an LP with $n$ variables and $m$ inequality constraints, and let $\myalpha$ be a basic feasible {\em dual} solution.
	We define the \emph{degeneracy level} of $\myalpha$ to be $n-\|\myalpha\|_0$.
\end{defn}

A highly dual degenerate LP relaxation is associated with many alternative LP basic primal optimal solutions, which usually corresponds to a large optimal face. The following result shows how the size of the optimal face (more precisely, its dimension) is related to the degeneracy level of a dual basic optimal solution.
\begin{prop}\label{prop:opt_face}
	Assume $\myalpha^*\in\bR^m_+$ is a dual basic optimal solution of an LP with $n$ variables and $m$ inequality constraints. Then, the optimal face of the LP has dimension at most $n-\|\myalpha^*\|_0$.
	Furthermore, if $\myalpha^*$ is the unique dual optimal solution, then the optimal face of the LP has dimension exactly $n-\|\myalpha^*\|_0$.
\end{prop}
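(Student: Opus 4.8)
The plan is to reduce both claims to statements about $T := \{\, i : \myalpha^*_i > 0 \,\}$, the support of the dual optimum, and the set of constraints that are tight on the optimal face. I would write the LP as $\min\{c^\top x : Ax \geq b\}$ with $A \in \bR^{m\times n}$, so that its dual is $\max\{b^\top \myalpha : A^\top \myalpha = c,\ \myalpha \geq 0\}$ and $|T| = \|\myalpha^*\|_0$. The one structural fact I would extract from the basic-feasibility hypothesis is the standard characterization that the columns of $A^\top$ indexed by the nonzero dual variables are linearly independent; equivalently, the rows $\{A_i\}_{i \in T}$ are linearly independent, so that the submatrix $A_T$ formed by these rows satisfies $\text{rank}(A_T) = \|\myalpha^*\|_0$.

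For the upper bound I would invoke complementary slackness: every primal optimal $x$ satisfies $\myalpha^*_i (A_i^\top x - b_i) = 0$, and hence $A_i^\top x = b_i$ for every $i \in T$. Thus the optimal face $F$ is contained in the affine subspace $\{\, x : A_T x = b_T \,\}$, whose dimension equals $n - \text{rank}(A_T) = n - \|\myalpha^*\|_0$. Since strong duality guarantees $F \neq \emptyset$, this gives $\dim(F) \leq n - \|\myalpha^*\|_0$ at once.

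For the exact value under the uniqueness hypothesis, the idea is to exhibit a primal optimal point in the relative interior of $F$ within that affine subspace. First I would identify $F$ explicitly as $\{\, x : A_T x = b_T,\ A_i^\top x \geq b_i\ (i \notin T) \,\}$: the inclusion ``$\subseteq$'' is complementary slackness, while ``$\supseteq$'' follows because any such $x$ is feasible and satisfies $c^\top x = (A^\top \myalpha^*)^\top x = \sum_{i \in T} \myalpha^*_i b_i = b^\top \myalpha^*$, hence is optimal. Next I would apply strict complementary slackness (the Goldman--Tucker theorem) to obtain an optimal primal--dual pair $(x^*, \bar\myalpha)$ in which, for each $i$, exactly one of $\bar\myalpha_i > 0$ and $A_i^\top x^* > b_i$ holds; uniqueness of the dual optimum forces $\bar\myalpha = \myalpha^*$. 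Then $x^*$ lies on $\{x : A_T x = b_T\}$ while satisfying $A_i^\top x^* > b_i$ strictly for every $i \notin T$, so a small ball around $x^*$ inside $\{x : A_T x = b_T\}$ stays in $F$; therefore $\aff(F) = \{x : A_T x = b_T\}$ and $\dim(F) = n - \|\myalpha^*\|_0$.

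The routine part is the upper bound, which is pure complementary slackness. I expect the main obstacle to be the matching lower bound, where the uniqueness hypothesis is essential: the delicate point is that strict complementarity only supplies a primal optimum that is strictly complementary to \emph{some} dual optimum, and it is precisely the uniqueness of $\myalpha^*$ that lets me identify that dual optimum with our own $\myalpha^*$, thereby placing $x^*$ in the relative interior of $F$ and fixing its dimension exactly.
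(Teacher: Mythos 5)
Your proof is correct and follows essentially the same route as the paper's: complementary slackness plus the fact that the support rows have rank $\|\myalpha^*\|_0$ gives the upper bound, and Goldman--Tucker strict complementarity combined with dual uniqueness produces a primal optimum with strict slack off the support, pinning the dimension exactly. The only cosmetic difference is that you obtain linear independence of $\{A_i\}_{i\in T}$ from the standard characterization of basic feasible solutions of the standard-form dual, whereas the paper derives the same rank fact via a perturbation argument on $\myalpha^*$ (and phrases the final dimension count through implicit equalities rather than a relative-interior ball around the strictly complementary point).
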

\begin{proof}
Assume the LP is of the form $\min\{c^\top x:Gx\geq h\}$.
Let $F$ denote the optimal face of the LP, i.e.,\begin{equation}\label{eqn:optimal_face}
	F=\{x:Gx\geq h,~c^\top x\leq h^\top \myalpha^*\}.
\end{equation}
Let $(g^k)^\top $ denote the $k$-th row of $G$. By complementary slackness of LP, $(g^k)^\top x=h_k$ for all $x\in F$ for all $k$ with $\myalpha^*_k> 0$. Since $\myalpha^*$ is a dual basic optimal solution, $\{(g^k,h_k)\}_{k:\myalpha^*_k> 0}$ are linearly independent. Otherwise, there exists $\beta\in\bR^m\setminus\{\mathbf{0}\}$ such that $\beta_k=0$ for all $k$ with $\myalpha_k^*=0$ and $\sum_{k:\myalpha_k^*> 0}\beta_k(g^k,h_k)=\mathbf{0}$. Then, note that $\myalpha^*+\epsilon\beta$ and $\myalpha^*-\epsilon\beta$ are both dual optimal solutions of the LP for small enough positive $\epsilon$, which contradicts the fact that $\myalpha^*$ is a dual basic optimal solution.
Therefore, $\dim(F)\leq n-\text{rank}(\{g^k\}_{k:\myalpha^*_k> 0})=n-\text{rank}(\{(g^k,h_k)\}_{k:\myalpha^*_k> 0})=n-\|\myalpha^*\|_0$. Here, the first inequality follows from \cite[Theorem 3.17]{conforti2014integer}, the second equality follows from the consistency of the linear system $\{(g^k)^\top x=h_k\}_{k:\myalpha^*_k> 0}$, and the third equality follows from linear independence of $\{(g^k,h_k)\}_{k:\myalpha^*_k> 0}$.

If $\myalpha^*$ is the unique dual optimal solution, by strict complementary slackness of LP \cite{goldman20164}, there exists an optimal solution $x^*\in F$ of the LP, such that $(g^k)^\top x^*>h_k$ for all $k$ with $\myalpha^*_k=0$. It implies that $\{(g^k)^\top x=h_k\}_{k:\myalpha^*_k> 0}$ and $c^\top x=h^\top \myalpha^*$ are exactly all the implicit equalities that hold in the inequality description \eqref{eqn:optimal_face} of $F$. By LP duality, $c^\top x=h^\top \myalpha^*$ is implied by $\{(g^k)^\top x=h_k\}_{k:\myalpha^*_k> 0}$. Therefore, by \cite[Theorem 3.17]{conforti2014integer}, $\dim(F)= n-\text{rank}(\{g^k\}_{k:\myalpha^*_k> 0})=n-\|\myalpha^*\|_0$.\end{proof}

We remark that Proposition \ref{prop:opt_face} does not extend to dual nonbasic optimal solutions, moreover, the $\ell_0$-norm of dual nonbasic optimal solutions can be greater than $n$. Note that the dual optimal solution is unique when the primal solution is nondegenerate. For the primal degenerate case, even if there exists a unique dual \emph{basic} optimal solution, it is possible that the dimension of the optimal face of the LP is strictly less than the degeneracy level of that dual basic optimal solution. See Example \ref{exmp:unique_primal} in Appendix \ref{apd:exmps} for an example where the unique dual \emph{basic} optimal solution has a strictly positive dual degeneracy level but the primal optimal solution is still unique.

Under some mild assumptions, Proposition \ref{prop:opt_face} implies the following.
\begin{prop}\label{Prop:DWBace_dim}
	Assume the LP \eqref{DWBLP} has a unique dual optimal solution. Then, the optimal face of \eqref{DWBLP} has dimension $n-q-\|\beta^{\T}\|_0$.
\end{prop}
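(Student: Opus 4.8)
The plan is to apply Proposition \ref{prop:opt_face} directly to the LP \eqref{DWBLP}. Viewing \eqref{DWBLP} as a minimization LP in inequality form with $n$ variables, its constraints are the $q$ last-iteration DWB cuts \eqref{DWB:lastIter} together with the linking constraints \eqref{con:linking}, so the associated dual vector splits as $(s,\beta)$, where $s\in\bR^q_+$ are the multipliers of the DWB cuts and $\beta\in\bR^{m_A}_+$ are the multipliers of $Ax\geq b$. To invoke Proposition \ref{prop:opt_face} I only need to pin down the (by hypothesis unique) dual optimal solution and compute its $\ell_0$-norm.

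First I would exhibit the explicit candidate $(s,\beta)=(\mathbf{1},\beta^{\T})$ and check dual feasibility. The dual feasibility (stationarity) constraint for coordinate $i$ reads $\sum_{j:i\in I(j)}\pi_i^{j,\T}s_j+A_i^\top\beta=c_i$; setting $s=\mathbf{1}$ and $\beta=\beta^{\T}$, this is exactly the identity $c_i=A_i^\top\beta^{\T}+\sum_{j:i\in I(j)}\pi_i^{j,\T}$ established in the proof of Theorem \ref{thm:DWBlast_iter}, and both $s=\mathbf{1}\geq 0$ and $\beta^{\T}\geq 0$ hold, so $(\mathbf{1},\beta^{\T})$ is dual feasible.

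Next I would certify optimality using the termination condition together with weak duality. The dual objective at $(\mathbf{1},\beta^{\T})$ equals $\sum_{j=1}^q D_j(\pi^{j,\T})+b^\top\beta^{\T}$. The nonnegativity of the reduced costs at termination gives $D_j(\pi^{j,\T})\geq\theta_j^{\T}$ for every $j$, whence this value is at least $\sum_{j=1}^q\theta_j^{\T}+b^\top\beta^{\T}=z_D$; on the other hand weak duality bounds it above by the primal optimum $z_D$. Hence $(\mathbf{1},\beta^{\T})$ attains the dual optimum $z_D$. Since the problem assumes a unique dual optimal solution, $(\mathbf{1},\beta^{\T})$ must be that solution, and a unique LP optimum is automatically a vertex, hence a dual basic optimal solution, so the hypotheses of Proposition \ref{prop:opt_face} are met.

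Finally I would read off the $\ell_0$-norm: $\|s\|_0=\|\mathbf{1}\|_0=q$ and the linking block contributes $\|\beta^{\T}\|_0$, so $\|(s,\beta)\|_0=q+\|\beta^{\T}\|_0$. The uniqueness clause of Proposition \ref{prop:opt_face} then yields that the optimal face of \eqref{DWBLP} has dimension exactly $n-\|(s,\beta)\|_0=n-q-\|\beta^{\T}\|_0$, as claimed. The only delicate point is the optimality certification of $(\mathbf{1},\beta^{\T})$ — in particular, ruling out a spurious count when some cut is trivial ($\pi^{j,\T}=0$): such a $j$ would make the corresponding $s_j$ free in both the feasibility and objective rows, contradicting uniqueness, so the hypothesis silently forces every $\pi^{j,\T}\neq 0$ and keeps $\|s\|_0=q$ exact; everything else is a routine application of the earlier results.
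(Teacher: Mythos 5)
Your proof is correct and follows essentially the same route as the paper's: the paper likewise observes that the identities and reduced-cost conditions from the proof of Theorem \ref{thm:DWBlast_iter} make $(1,\ldots,1,\beta^{\T})$ a dual optimal solution of \eqref{DWBLP}, and then invokes the uniqueness clause of Proposition \ref{prop:opt_face}. You merely spell out the dual feasibility, weak-duality, and basicness-from-uniqueness steps that the paper leaves implicit, and your closing remark that uniqueness forces $\pi^{j,\T}\neq 0$ for every $j$ matches the paper's observation that under this hypothesis none of the last-iteration DWB cuts can be redundant.
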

\begin{proof}
Note that the proof of Theorem \ref{thm:DWBlast_iter} implies that $(1,\ldots,1,\beta^{\T})$ is a dual optimal solution of \eqref{DWBLP}. The result then follows from Proposition \ref{prop:opt_face}.
\end{proof}
Proposition \ref{Prop:DWBace_dim} also implies that if the dual optimal solution is unique, then none of the last-iteration DWB cuts can be redundant. 
If this is not the case, the dimension of the optimal face after adding the last iteration cuts depends on the number of cuts that are active. Note that when applying the last-iteration DWB cuts in practice, we would add them to the original formulation \eqref{original_MIP}, resulting in an LP optimal face whose size can be even smaller due to constraints $x_{I(j)}\in P^j$, $j\in J$. 

\section{ Bound Computation and Cut Generation via Lagrangian Relaxation}\label{sec:Lag}

We next discuss an alternative approach to generate cutting planes to recover the DW bound that uses Lagrangian relaxation \cite{fisher1981lagrangian}.
As we discuss later, this approach has better computational performance in practice due to stabilization.

In  Lagrangian relaxation, separate auxiliary variables are created for each block and these auxiliary variables are  related to the original variables using additional (copying) constraints.  The copying constraints together with the linking constraints $Ax\ge b$ are then dualized into the objective to obtain a Lagrangian relaxation of \eqref{DW_reform}. 
More precisely, one writes
\begin{subequations}\label{equivalent_MIP}
	\begin{alignat}{3}
		z_D=\min\ &c^\top x\\
		\text{s.t. }&y^j\in \conv(Q^j),\quad &&j\in J,\\
		&y^j=x_{I(j)}, &&j\in J,\quad&&\qquad(\pi^j)\label{equivalent_MIP_C}\\
		&Ax\geq b.&&&&\qquad(\beta)\label{equivalent_MIP_D}
	\end{alignat}
\end{subequations}
After dualizing constraints \eqref{equivalent_MIP_C} and \eqref{equivalent_MIP_D} 
using multipliers $\pi$ and $\beta\ge0$, 
one obtains the following Lagrangian relaxation:\begin{equation}\label{Lag_relax}
	\begin{aligned}
		z(\pi,\beta)=\min\ &c^\top x+\sum_{j=1}^q (\pi^j)^\top (y^j-x_{I(j)})+\beta^\top (b-Ax),\\
		\text{s.t. }&y^j\in Q^j,\quad j\in J.
	\end{aligned}
\end{equation}
Note that when the index sets $\{I(j)\}_{j=1}^q$ are nonoverlapping, \eqref{equivalent_MIP} and \eqref{Lag_relax} can be simplified by properly removing the copying constraints and the associated dual variables $\pi$.

In general, it follows from Lagrangian duality \cite{wolsey1999integer} that the largest Lagrangian relaxation bound matches $z_D$, i.e.,
\begin{equation}\label{eq:zeros}z_D=\max_{\beta\ge0,\pi}\ z(\pi,\beta).\end{equation}
Note that  $x$ is unconstrained in \eqref{Lag_relax} and therefore $z(\pi,\beta)=-\infty$ unless the coefficients of the $x$ variables in the objective function  are zero, i.e.,
$$c_i-\sum_{j:i\in I(j)}\pi^j_i-\beta^\top A_i=0\qquad \text{ for all } i\in\{1,\ldots,n\},$$
where   $A_i$ denotes the $i$-th column of $A$. Consequently, the Lagrangian dual problem \eqref{eq:zeros} can be equivalently written as the following Wolfe dual problem:
\begin{subequations}\label{Lag_dual}
	\begin{align}
		z_D=\max\ &z(\pi,\beta)\label{dual_obj}\\
		\text{s.t. }&\sum_{j:i\in I(j)}\pi^j_i+\beta^\top A_i=c_i,\quad i=1,\ldots,n,\label{dual_con}\\
		&\beta\geq 0.\label{dual_sign}
	\end{align}
\end{subequations}
For $(\pi,\beta)$ satisfying \eqref{dual_con} and \eqref{dual_sign}, it holds that $$z(\pi,\beta)=\sum_{j=1}^q D_j(\pi^j)+b^\top \beta,$$where $D_j:\bR^{|I(j)|}\rightarrow \bR\cup\{-\infty\}$ is a piecewise linear concave function of the form \begin{equation}\label{Def:Dj}
	D_j(\pi^j)=\min\{(\pi^j)^\top v:v\in Q^j \}.
\end{equation}
Therefore, \eqref{Lag_dual} is a nonsmooth convex optimization problem with a separable objective function. It is worth emphasizing that the pricing problem \eqref{DWpricing} in DW decomposition has exactly the same form as \eqref{Def:Dj}. The function values and supergradients of the concave function $D_j(\cdot)$ can be evaluated by solving \eqref{Def:Dj} \cite{fisher1981lagrangian} (an optimal solution of \eqref{Def:Dj} is a supergradient of $D_j(\cdot)$ at $\pi^j$).
This alternative way of viewing DW bound $z_D$ as the optimal value of \eqref{Lag_dual} allows us to use various convex optimization methods for computing DW bound $z_D$. For example, DW decomposition is equivalent to applying the classical cutting plane method \cite{kelley1960cutting} to solve \eqref{Lag_dual}. Since the description of $Q^j$ involves integer variables in general, functions $D_j(\cdot)$ are often piecewise linear concave with exponentially many pieces. In that case, convex optimization methods with some stabilization techniques (e.g., the level method \cite{lemarechal1995new}) often outperform the cutting plane method, and the difference can be significant. Figure \ref{Fig:CutplVsLevel} is a representative example of the difference in performance between the cutting plane method and the level method for computing the DW bound $z_D$ (averaged over a set of multiple knapsack assignment problem instances that are used in Section \ref{sec:MKAP}). Details about our implementation of the level method are presented in Appendix \ref{apd:level}.

\begin{figure}[bt!]
	\centering
	\caption{Comparison of the Cutting Plane Method (Left) and the Level Method (Right)}
	\label{Fig:CutplVsLevel}
	\includegraphics[width=0.43\linewidth]{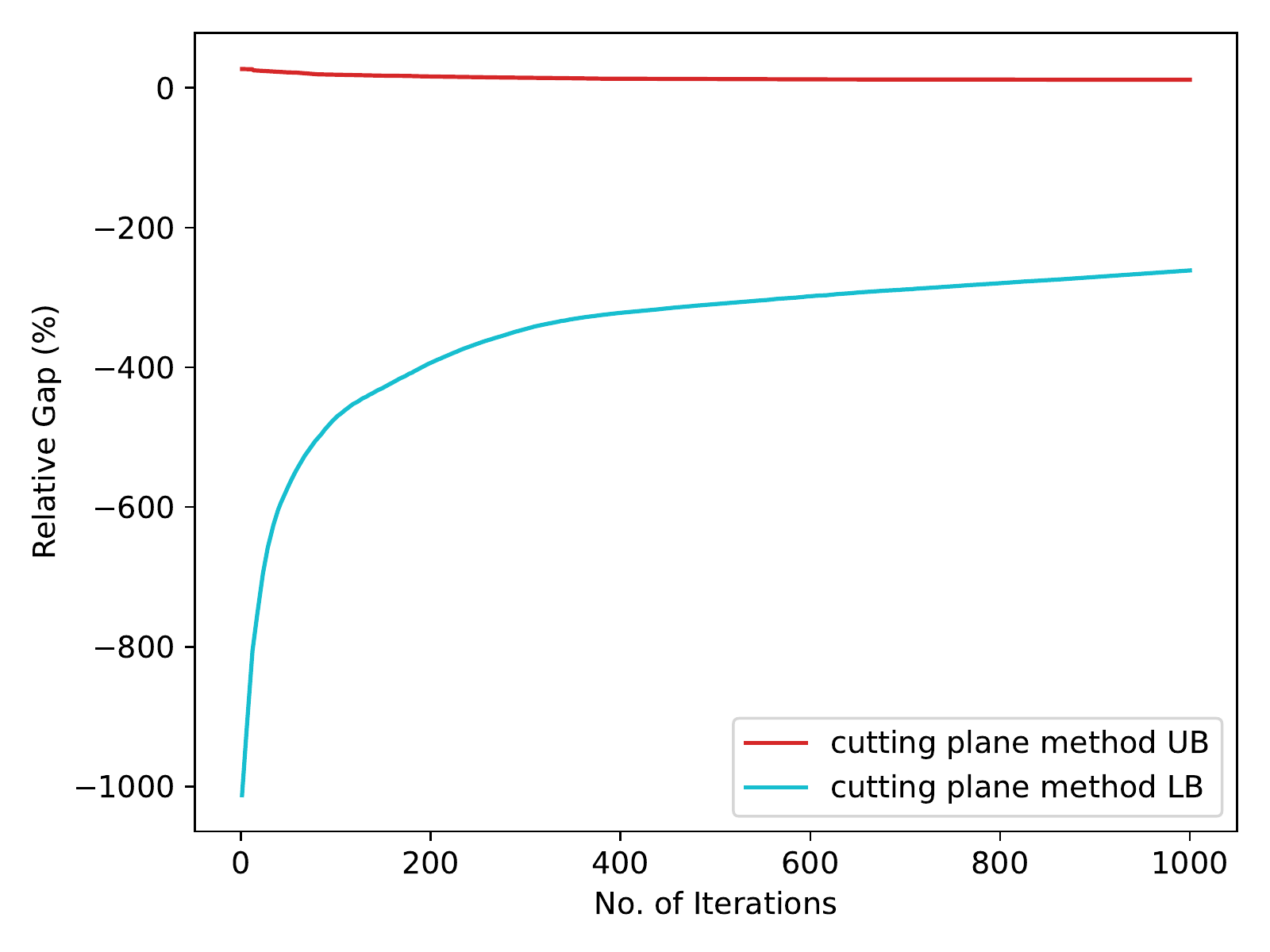}
	\includegraphics[width=0.43\linewidth]{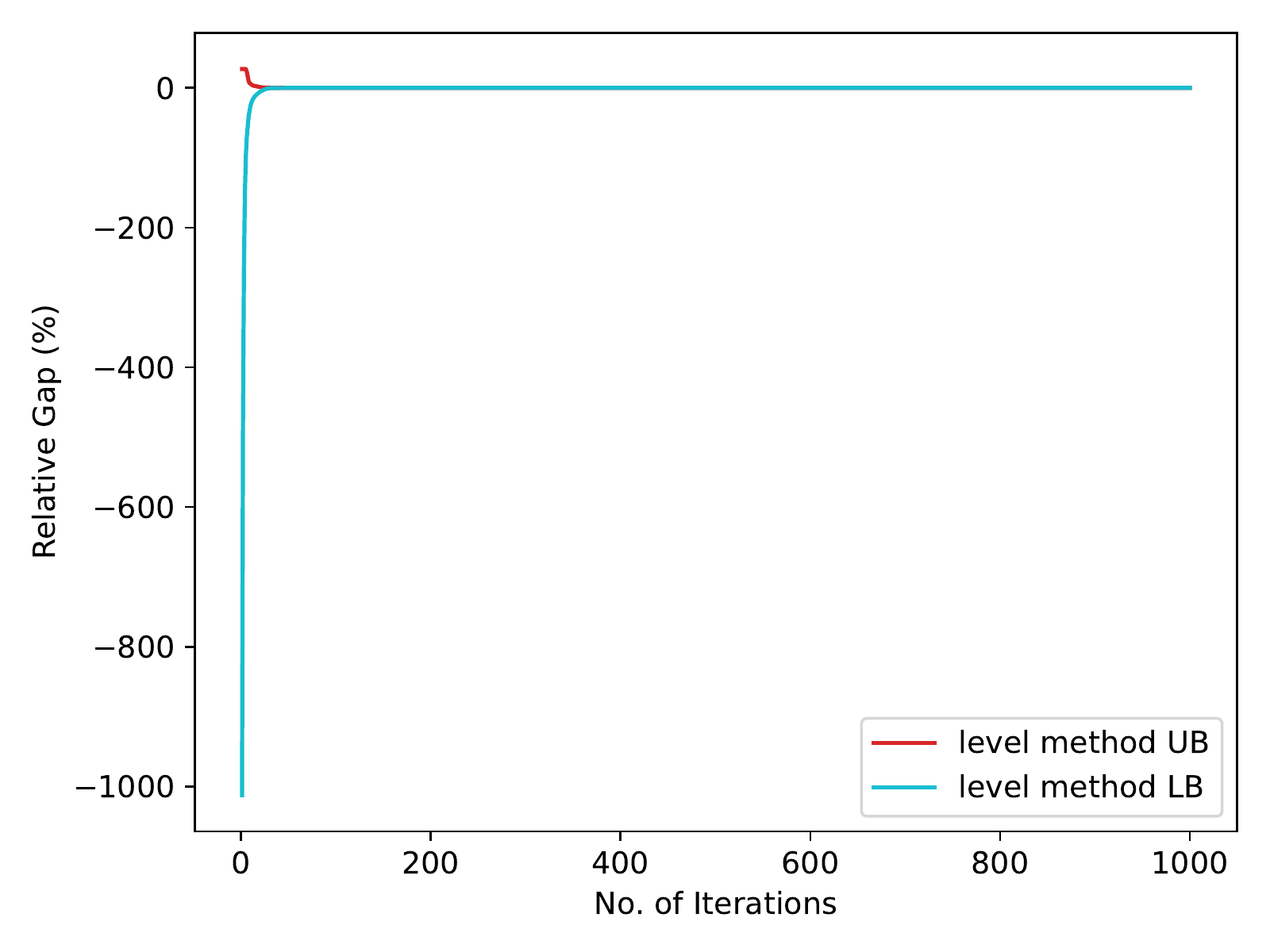}
\end{figure}

\subsection{Cut Generation From the Dual}
As discussed earlier, solving the dual problem can be computationally more efficient than the standard DW decomposition. During the solution of the dual problem \eqref{Lag_dual}, DWB cuts similar to \eqref{DWcuts} can also be generated every time we evaluate the function values of $D_j(\cdot)$. The following result demonstrates the strength of DWB cuts generated from the evaluation of the Lagrangian dual function at any point $(\pi,\beta)$ with $z(\pi,\beta)>-\infty$.
\begin{prop}\label{prop:dual_bnd}
	Let $(\pi,\beta)$ be dual multipliers for \eqref{Lag_relax} satisfying constraints \eqref{dual_con} and \eqref{dual_sign}.
	Then, \begin{equation}\label{DWBbnd:intermediate}\begin{aligned}
			z(\pi,\beta)\leq \min\ &c^\top x\\
			\text{s.t. }&(\pi^j)^\top x_{I(j)}\geq D_j(\pi^j),\quad j\in J,\\[.2cm]
			&Ax\geq b.
		\end{aligned}
	\end{equation}
\end{prop}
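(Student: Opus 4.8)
The plan is to establish \eqref{DWBbnd:intermediate} as a weak-duality inequality: I would fix an arbitrary $x$ feasible for the minimization on the right-hand side (i.e., satisfying both the DWB cuts $(\pi^j)^\top x_{I(j)}\geq D_j(\pi^j)$ for all $j\in J$ and the linking constraints $Ax\geq b$) and show directly that $c^\top x\geq z(\pi,\beta)$. Since the bound would then hold for every feasible $x$, it holds in particular for a minimizer, which gives the claim. This closely mirrors the chain of inequalities \eqref{eqn:LIDWB_dualbound} in the proof of Theorem \ref{thm:DWBlast_iter}; the only structural difference is that here $(\pi,\beta)$ is an arbitrary point satisfying \eqref{dual_con} and \eqref{dual_sign} rather than the specific last-iteration dual solution, so the final step invokes the closed form $z(\pi,\beta)=\sum_{j=1}^q D_j(\pi^j)+b^\top\beta$ in place of the reduced-cost/termination argument used there.

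The key algebraic step is to use the stationarity constraint \eqref{dual_con}, namely $c_i=\sum_{j:i\in I(j)}\pi^j_i+\beta^\top A_i$, to rewrite the objective. Substituting this into $c^\top x=\sum_{i=1}^n c_i x_i$ and exchanging the order of summation yields
\[
c^\top x=\sum_{j=1}^q(\pi^j)^\top x_{I(j)}+\beta^\top(Ax),
\]
where the first term collects, for each block $j$, the contributions $\pi^j_i x_i$ over $i\in I(j)$. The one point requiring care is the bookkeeping of overlapping index sets: an index $i$ belonging to several blocks contributes once to each corresponding inner product, and the identity $\sum_{i=1}^n\sum_{j:i\in I(j)}\pi^j_i x_i=\sum_{j=1}^q(\pi^j)^\top x_{I(j)}$ is precisely the correct accounting for this.

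With the objective in this form, the two lower bounds are immediate. The DWB cuts give $\sum_{j=1}^q(\pi^j)^\top x_{I(j)}\geq\sum_{j=1}^q D_j(\pi^j)$, while $\beta\geq 0$ together with $Ax\geq b$ gives $\beta^\top(Ax)\geq b^\top\beta$. Adding these and invoking $z(\pi,\beta)=\sum_{j=1}^q D_j(\pi^j)+b^\top\beta$ yields $c^\top x\geq z(\pi,\beta)$, as desired.

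I do not expect a genuine obstacle, as the argument is a direct substitution-and-sign computation with no analytic content. The only items meriting a brief remark are the degenerate cases: if $D_j(\pi^j)=-\infty$ for some $j$, then $z(\pi,\beta)=-\infty$ and the inequality is trivial, so I may assume all $D_j(\pi^j)$ are finite (equivalently $z(\pi,\beta)>-\infty$, as in the surrounding discussion); and if the right-hand feasible region is empty, the minimum is $+\infty$ and the bound holds vacuously. Beyond these trivialities the proof is the verification above.
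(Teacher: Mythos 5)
Your proof is correct and follows essentially the same route as the paper's: substitute the stationarity constraint \eqref{dual_con} into $c^\top x$, regroup the sum by blocks to get $\sum_{j}(\pi^j)^\top x_{I(j)}+\beta^\top Ax$, and bound each term using the DWB cuts, $\beta\geq 0$, $Ax\geq b$, and the closed form $z(\pi,\beta)=\sum_j D_j(\pi^j)+b^\top\beta$. Your additional remarks on the degenerate cases ($D_j(\pi^j)=-\infty$ or an empty feasible region) are harmless extras the paper leaves implicit.
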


\begin{proof}
Consider any $x\in\bR^n$ feasible to the right-hand side LP of \eqref{DWBbnd:intermediate}. Since $c_i=\sum_{j:i\in I(j)}\pi^j_i+\beta^\top A_i$ for $i=1,\ldots,n$ by \eqref{dual_con}, we have\begin{multline*}
	c^\top x=\sum_{i=1}^n\sum_{j:i\in I(j)}\pi^j_ix_i+\beta^\top Ax=\sum_{j=1}^q\underbrace{(\pi^j)^\top x_{I(j)}}_{\geq D_j\big(\pi^j\big)}+{\underbrace{\beta}_{\geq 0}}^\top \underbrace{Ax}_{\geq b}
	\geq\sum_{j=1}^pD_j(\pi^j)+b^\top \beta(\tau)=z(\pi,\beta).
\end{multline*}
\end{proof}
Note that, unlike Theorem \ref{thm:DWBlast_iter}, Proposition \ref{prop:dual_bnd} does not depend on how the dual multiplier is obtained. If one can solve the dual problem \eqref{Lag_dual} to optimality, then Proposition \ref{prop:dual_bnd} implies that one can recover DW bound using DWB cuts associated with that optimal solution of \eqref{Lag_dual}. The single block case ($q=1$) of Proposition \ref{prop:dual_bnd} simplifies to the idea of Lagrangian cuts \cite{shapiro1971generalized}. 
\begin{cor}
	Let $(\bar{\pi},\bar{\beta})$ be an optimal solution of \eqref{Lag_dual}. Then,\begin{subequations}\label{DWoptdual_LP}\begin{align}
			z_D=\min\ &c^\top x\\
			\text{s.t. }&(\bar{\pi}^j)^\top x_{I(j)}\geq D_j(\bar{\pi}^j),\quad j\in J,\label{DWB:dual_lastIter}\\[.2cm]
			&Ax\geq b.
		\end{align}
	\end{subequations}
\end{cor}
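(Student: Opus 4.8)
The plan is to obtain the claimed equality by sandwiching the optimal value of the right-hand-side LP in \eqref{DWoptdual_LP} between $z_D$ and $z_D$, using two facts that are already available: Proposition \ref{prop:dual_bnd} for one inequality and validity of the DWB cuts for the other. Write $L:=\min\{c^\top x:(\bar{\pi}^j)^\top x_{I(j)}\geq D_j(\bar{\pi}^j),\ j\in J;\ Ax\geq b\}$ for the value of the LP on the right of \eqref{DWoptdual_LP}; the goal is to show $L=z_D$.

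First I would establish $z_D\leq L$. Since $(\bar{\pi},\bar{\beta})$ is an optimal solution of \eqref{Lag_dual}, it in particular satisfies the dual feasibility conditions \eqref{dual_con} and \eqref{dual_sign}, so Proposition \ref{prop:dual_bnd} applies verbatim with $(\pi,\beta)=(\bar{\pi},\bar{\beta})$ and yields $z(\bar{\pi},\bar{\beta})\leq L$. It then remains only to identify the left-hand side with $z_D$: by strong Lagrangian duality \eqref{eq:zeros} we have $z_D=\max_{\beta\geq 0,\pi} z(\pi,\beta)$, and because $(\bar{\pi},\bar{\beta})$ attains this maximum, $z(\bar{\pi},\bar{\beta})=z_D$. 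Combining these two relations gives $z_D\leq L$.

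For the reverse inequality $L\leq z_D$ I would argue purely through validity. Each cut $(\bar{\pi}^j)^\top x_{I(j)}\geq D_j(\bar{\pi}^j)$ is, by definition of $D_j(\cdot)$ in \eqref{Def:Dj}, a valid inequality for $\conv(Q^j)$; hence every $x$ feasible to the DW relaxation \eqref{DW_reform}, i.e.\ satisfying $x_{I(j)}\in\conv(Q^j)$ for all $j\in J$ together with $Ax\geq b$, also satisfies all the constraints defining the LP in \eqref{DWoptdual_LP}. Thus the feasible region of \eqref{DW_reform} is contained in that of the right-hand-side LP, and minimizing the same objective $c^\top x$ over the larger set can only decrease the optimum, giving $L\leq z_D$. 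Putting the two inequalities together yields $L=z_D$.

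I do not expect any genuine obstacle here: the statement is a direct corollary, and the only point requiring care is bookkeeping of the two directions, namely remembering that the lower bound $z_D\leq L$ relies on \emph{optimality} of $(\bar{\pi},\bar{\beta})$ (through \eqref{eq:zeros}) whereas the upper bound $L\leq z_D$ uses nothing about optimality and holds for the cuts induced by any dual-feasible multipliers. This mirrors the remark following Proposition \ref{prop:dual_bnd} that the validity direction is insensitive to how the multipliers are produced, while recovering the exact bound $z_D$ is what forces $(\bar{\pi},\bar{\beta})$ to be dual-optimal.
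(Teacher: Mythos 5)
Your proof is correct and follows exactly the route the paper intends: the paper states this corollary without a separate proof precisely because, as you argue, Proposition \ref{prop:dual_bnd} applied at the dual-optimal $(\bar{\pi},\bar{\beta})$ (together with \eqref{eq:zeros} giving $z(\bar{\pi},\bar{\beta})=z_D$) yields one direction, while validity of the DWB cuts for $\conv(Q^j)$ — the same observation used for the ``$\geq$'' direction in the proof of Theorem \ref{thm:DWBlast_iter} — yields the other. Your closing remark about which direction uses optimality also matches the paper's discussion following Proposition \ref{prop:dual_bnd}.
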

Results similiar to Propostion \ref{Prop:DWBace_dim} can be derived for the optimal face of the LP \eqref{DWoptdual_LP} that utilizes DWB cuts associated with an optimal Lagrangian dual solution.
We also call inequalities \eqref{DWB:dual_lastIter} \emph{last-iteration DWB cuts} as they recover the same dual bound $z_D$ and are often generated in the last iteration of the Lagrangian dual algorithm.
Even if the Lagrangian dual problem is not solved to optimality, Proposition \ref{prop:dual_bnd} still guarantees that a dual bound that is at least as strong as the best Lagrangian dual bound can be obtained by generating DWB cuts associated with the dual solution that provides the strongest bound so far. Besides, there may be values for adding DWB cuts obtained at different dual multipliers. See Example \ref{exmp:intermediate_cuts} in Appendix \ref{apd:exmps} showing that DWB cuts can potentially provide stronger dual bounds than the best Lagrangian dual bound.

\section{Generating a Stronger Relaxation}\label{sec:Strengthening}
In this section we describe how to strengthen DWB cuts to obtain a stronger relaxation. The strengthened cutting planes are still DWB cuts and therefore do not lead to bounds stronger than $z_D$. However, these strengthened cutting planes may potentially make more constraints active in the LP relaxation, and therefore can help reduce the dual degeneracy level of the formulation and improve MIP solver performance. Moreover, the strengthened cutting planes are more likely to define high-dimensional faces of the original problem, as they define higher-dimensional faces of the block polyhedra $\conv(Q^j)$.

\subsection{Disjunctive Coefficient Strengthening}\label{sec:coef_str}
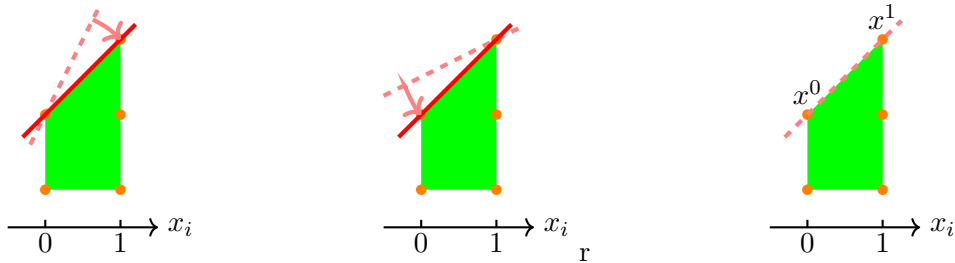
\begin{figure}[tb!]
	\centering
	\caption{Disjunctive Coefficient Strengthening for Three Cases (Dashed: Original Cut, Solid: Strengthened Cut)}
	\label{fig:str}
	\begin{subfigure}{0.3\textwidth}
		\centering
		\begin{tikzpicture}
			\fill[green] (1,1) -- (1,3) -- (0,2) -- (0,1) -- cycle;
			\fill[orange] (0,1) circle[radius = 0.07];
			\fill[orange] (0,2) circle[radius = 0.07];
			\fill[orange] (1,3) circle[radius = 0.07];
			\fill[orange] (1,1) circle[radius = 0.07];
			\fill[orange] (1,2) circle[radius = 0.07];
			\draw[red!50, dashed, ultra thick] (-0.2,1.6) -- (0.7,3.4);
			\draw[red, ultra thick] (-0.3,1.7) -- (1.2,3.2);
			\draw[red!50, ->, ultra thick] (0.632,3.264) to[out=-26.6,in=135] (1,3);
			\draw[black, ->, thick] (-0.5,0.5) to (1.5,0.5);
			\node at (1.8,0.5) {$x_i$};
			\draw[thick] (0,0.5) -- (0,0.6);
			\node at (0,0.3) {0};
			\draw[thick] (1,0.5) -- (1,0.6);
			\node at (1,0.3) {1};
		\end{tikzpicture}
	\end{subfigure}
	\begin{subfigure}{0.3\textwidth}
		\centering
		\begin{tikzpicture}
			\fill[green] (1,1) -- (1,3) -- (0,2) -- (0,1) -- cycle;
			\fill[orange] (0,1) circle[radius = 0.07];
			\fill[orange] (0,2) circle[radius = 0.07];
			\fill[orange] (1,3) circle[radius = 0.07];
			\fill[orange] (1,1) circle[radius = 0.07];
			\fill[orange] (1,2) circle[radius = 0.07];
			\draw[red!50, dashed, ultra thick] (-0.5,2.25) -- (1.3,3.15);
			\draw[red, ultra thick] (-0.3,1.7) -- (1.2,3.2);
			\draw[red!50, ->, ultra thick] (-0.2,2.35) to[out=116.36,in=135] (0,2);
			\draw[black, ->, thick] (-0.5,0.5) to (1.5,0.5);
			\node at (1.8,0.5) {$x_i$};
			\draw[thick] (0,0.5) -- (0,0.6);
			\node at (0,0.3) {0};
			\draw[thick] (1,0.5) -- (1,0.6);
			\node at (1,0.3) {1};
		\end{tikzpicture}r
	\end{subfigure}
	\begin{subfigure}{0.3\textwidth}
		\centering
		\begin{tikzpicture}
			\fill[green] (1,1) -- (1,3) -- (0,2) -- (0,1) -- cycle;
			\fill[orange] (0,1) circle[radius = 0.07];
			\fill[orange] (0,2) circle[radius = 0.07];
			\fill[orange] (1,3) circle[radius = 0.07];
			\fill[orange] (1,1) circle[radius = 0.07];
			\fill[orange] (1,2) circle[radius = 0.07];
			\draw[red!50, dashed, ultra thick] (-0.3,1.7) -- (1.25,3.25);
			\draw[black, ->, thick] (-0.5,0.5) to (1.5,0.5);
			\node at (1.8,0.5) {$x_i$};
			\draw[thick] (0,0.5) -- (0,0.6);
			\node at (0,0.3) {0};
			\draw[thick] (1,0.5) -- (1,0.6);
			\node at (1,0.3) {1};
			\node at (0,2.3) {$x^0$};
			\node at (1,3.3) {$x^1$};
		\end{tikzpicture}
	\end{subfigure}
\end{figure}
We first describe a disjunctive coefficient strengthening technique for binary variables \cite{andersen2010coefficient} that strengthens the coefficients of a valid inequality one at a time. 
Given a valid inequality $a^\top x\geq f$  for a mixed integer linear set $Q$, let  $Q^=:=\{x\in Q\::\ a^\top x=f\}$.
For a  binary variable $x_i$, its  coefficient $a_i$ in the cut can be strengthened if one of the following two cases hold: $(i)$  $x_i=0$ for all $x\in Q^=$, or,  $(ii)$ $x_i=1$ for all $x\in Q^=$.
If there are points $x^0,x^1\in Q^=$ such that $x^0_i=0$ and  $x^1_i=1$, then this approach does not improve (i.e., decrease) the coefficient of the variable.   Figure \ref{fig:str} shows two-dimensional examples of all three cases. 

Note that if we solve\begin{equation}\label{CT:subproblem0}
	\bar{f}=\min\{a^\top x:x\in Q,x_i=1\}
\end{equation}
and observe that  $\bar{f}>f$, then we can strengthen the original inequality $a^\top x\geq f$ to be $a^\top x\geq f+(\bar{f}-f)x_i$ using the disjunction
\begin{displaymath}
	Q=\{x\in Q:x_i=0\}\cup\{x\in Q:x_i=1\}.
\end{displaymath}
Similarly, if we solve \begin{equation}\label{CT:subproblem1}
	\bar{f}=\min\{a^\top x:x\in Q,x_i=0\},
\end{equation}
and observe that $\bar{f}>f$, then we can strengthen the original inequality $a^\top x\geq f$ to be $a^\top x\geq f+(\bar{f}-f)(1-x_i)$.
If either problem \eqref{CT:subproblem0} or \eqref{CT:subproblem1} is infeasible, then one can simply fix variable $x_i$ to 0 or 1, respectively.
It is easy to verify that the original inequality is implied by the strengthened inequality together with the bound constraint $x_i\geq 0$ or $x_i\leq 1$. Therefore, if the original inequality is active in the LP relaxation, then one round of coefficient strengthening (if applicable) would potentially make a bound constraint active in the LP relaxation and reduce the dual degeneracy level. Note that this approach does not increase the size of the  formulation. 

For strengthening a DWB cut obtained from a block $j$, we set $Q=Q^j$ and apply coefficient strengthening sequentially to all coefficients of binary variables. Note that using a different ordering of the binary variables may lead to different strengthened cutting planes in the end. For simplicity, we use the ordering of the variables in the original formulation to strengthen DWB cuts in our numerical experiments.
We also keep a set $L$ of points that are known to be elements of $Q^=$, generated from previous solutions of \eqref{Def:Dj}, \eqref{CT:subproblem0} and \eqref{CT:subproblem1}. If there are  $x^0,x^1\in L$ such that $x^0_i=0$ and $x_i^1=1$, then without solving  \eqref{CT:subproblem0} or \eqref{CT:subproblem1} we  conclude that 
disjunctive strengthening cannot be applied to the $i$-th coefficient. 

\subsection{Strengthening via Tilting}\label{subsec:tilting}
We next describe a tilting technique introduced by \cite{espinoza2010lifting,chvatal2013local} that starts with a valid inequality and iteratively tilts it to obtain a facet-defining inequality. 
Let $a^\top x\geq f$ be a  valid inequality for $Q$ and assume that it is not facet-defining. Also assume that $\conv(Q)$ is full dimensional and there is a set $Q'\subseteq Q$ such that all points $x\in Q'$  satisfy $a^\top x=f$. The algorithm first generates a point $\bar{x}\in Q\setminus Q'$ that satisfies $a^\top \bar{x}>f$, and a vector $(v,w)$ such that $v^\top x=w$ for all $x\in Q'\cup\{ \bar{x}\}$. 
The  algorithm then does the following:
\begin{enumerate}
	\item If $v^\top x\geq w$ is valid for $Q$, then the algorithm outputs $\bar{x}$. 
	Otherwise the algorithm computes the largest $\lambda^+\in\bR_+$ such that $(a+\lambda^+v)^\top x\geq f+\lambda^+w$ is valid for $Q$ and outputs this inequality together with a point $\bar{x}^+\in Q\setminus Q'$ satisfying $(a+\lambda^+v)^\top \bar{x}^+=f+\lambda^+w$;
	\item If $v^\top x\leq w$ is valid for $Q$, then the algorithm outputs $\bar{x}$.
	Otherwise the algorithm computes the largest $\lambda^-\in\bR_+$ such that $(a-\lambda^-v)^\top x\geq f-\lambda^-w$ is valid for $Q$ and outputs this inequality together with a point $\bar{x}^-\in Q\setminus Q'$ satisfying $(a-\lambda^-v)^\top \bar{x}^-\geq f-\lambda^-w$.
\end{enumerate}
Note that when $\conv(Q)$ is full dimensional, both  $v^\top x\geq w$  and $v^\top x\leq w$  cannot be valid and consequently we obtain two (possibly identical) valid inequalities whose conic combination implies the original inequality $a^\top x\geq f$ (depicted in Figure \ref{fig:tilt}). 
Moreover, each one of the these inequalities has one more \emph{known} feasible point on its associated face than $a^\top x\geq f$.
\cite{chvatal2013local} show that recursively tilting one of the two obtained valid inequalities leads to a facet-defining inequality for $\conv(Q)$.

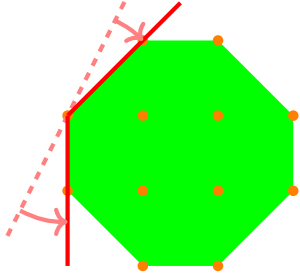
\begin{figure}[tb!]
	\centering
	\caption{Tilting One Valid Inequality Into Two Stronger Valid Inequalities}
	\label{fig:tilt}
	\begin{tikzpicture}
		\fill[green] (1,0) -- (2,0) -- (3,1) -- (3,2) -- (2,3) -- (1,3) -- (0,2) -- (0,1) -- cycle;
		\fill[orange] (0,1) circle[radius = 0.07];
		\fill[orange] (0,2) circle[radius = 0.07];
		\fill[orange] (1,0) circle[radius = 0.07];
		\fill[orange] (2,0) circle[radius = 0.07];
		\fill[orange] (3,1) circle[radius = 0.07];
		\fill[orange] (3,2) circle[radius = 0.07];
		\fill[orange] (1,3) circle[radius = 0.07];
		\fill[orange] (2,3) circle[radius = 0.07];
		\fill[orange] (1,1) circle[radius = 0.07];
		\fill[orange] (2,1) circle[radius = 0.07];
		\fill[orange] (1,2) circle[radius = 0.07];
		\fill[orange] (2,2) circle[radius = 0.07];
		\draw[red!50, dashed, ultra thick] (-0.8,0.4) -- (0.8,3.6);
		\draw[red, ultra thick] (0,2) -- (1.5,3.5);
		\draw[red, ultra thick] (0,2) -- (0,0);
		\draw[red!50, ->, ultra thick] (0.632,3.264) to[out=-26.6,in=135] (1,3);
		\draw[red!50, ->, ultra thick] (-0.632,0.736) to[out=-26.6,in=180] (0,0.586);
	\end{tikzpicture}
\end{figure}

In our context, we apply the tilting idea with the following modifications. For a DWB cut associated with block $j$, we set $Q$ to be $Q^j$,  and instead of picking one of the two tilted inequalities for the subsequent tilting iteration, we  apply  tilting  to both.   
By doing so, we create a binary tree where the root node corresponds to the original DWB cut and the remaining nodes correspond to inequalities obtained by tilting the inequalities associated with their parent nodes. 
For any such tree, cuts associated with the leaf nodes imply the original DWB cut associated with the root node.
We call the collection of inequalities associated with the leaf nodes of a depth-$d$  tree \emph{depth-$d$ tilted DWB cuts}. 
Using a depth-$d$ tree means replacing one DWB cut with up to $2^d$ DWB cuts, which can be computationally expensive if $d$ is large. Therefore, it is often beneficial to choose a relatively small value for $d$. 
Note that this process does not improve the DW bound but can potentially help reduce the dimension of the LP optimal face as more constraints are likely to become active at the optimal face.

To improve computational performance, we collect the feasible points for block $j\in J$ that we encounter during the overall algorithm and store them in a set $\hat{Q}^j\subseteq Q^j$.  
Using this set of  points can reduce the number of oracle calls needed as follows. 
First, we can check if any point in $\hat{Q}^j$ satisfies the condition for $\bar{x}$ and use it for choosing $(v,w)$, thus avoiding an extra call to the optimization oracle.
In addition, when computing $\lambda^+$, we use $\hat{Q}^j$ to obtain the following upper bound on  it:
\begin{displaymath}
	\lambda^+ := \min_{x\in Q^j:v^\top x<w}\frac{a^\top x-f}{w-v^\top x}~~~\leq ~~\min_{x\in \hat{Q}^j:v^\top x<w}\frac{a^\top x-f}{w-v^\top x}.
\end{displaymath}
This upper bound can be very close to the final $\lambda^+$ value and, in practice, we have observed a reduction in the number of oracle calls needed to compute $\lambda^+$. This in turn reduces the time spent on tilting significantly.
We use the same idea when computing $\lambda^-\in\bR_+$. 
Finally, we would like to remark that the overall procedure can be sensitive to the optimality gap tolerances in modern MIP solvers. One has to be cautious about the validity of the generated cuts in practice.


\section{Computational Experiments}\label{sec:MKAP}
We test our approaches on the Multiple Knapsack Assignment Problem (MKAP) introduced by \cite{kataoka2014upper} and the Temporal Knapsack Problem (TKP) as coined by \cite{bartlett2005temporal}. Detailed descriptions of MKAP and TKP, their DW reformulations as well as the test instances are presented in Appendices \ref{apd:MKAP} and \ref{apd:TKP}, respectively.

We compare the performance of the following formulations:
\begin{enumerate}
	\item \MIP: The original formulation \eqref{original_MIP}; \item \OBJ: Objective function cut $c^\top x\geq z_D$ added into the original formulation;
	\item \DWB: Last-iteration DWB cuts \eqref{DWB:dual_lastIter} added into the original formulation;
	\item \STR: Last-iteration DWB cuts \eqref{DWB:dual_lastIter} with disjunctive coefficient strengthening added into the original formulation;
	\item \DkT{d} : Last-iteration DWB cuts \eqref{DWB:dual_lastIter} with disjunctive coefficient strengthening and depth-$d$ tilting added into the original formulation.
\end{enumerate}

All experiments are run on a computer with a Intel(R) Xeon(R) Gold 6258R processor running at 2.7GHz, with up to 4 threads used. All optimization problems are solved using the optimization solver Gurobi with version 9.5.0 for MKAP and version 10.0.2 for TKP.

In MKAP, the parameters $|K|$, $|M|$, and $|N|$ define the size of the instances. In all the tables presented in this section, statistics presented in a row correspond to averages over 30 instances and 10 instances for MKAP and TKP, respectively. To avoid repetition, detailed statistics are reported only on MKAP test instances in Sections \ref{subsec:PolyRel}-\ref{subsec:BnC_MKAP}. For TKP, we concentrate on a smaller set of hard instances in Section \ref{sec:TKP}.

\subsection{Comparing Relaxations for MKAP}\label{subsec:PolyRel}
We first present some results regarding lower bounds for MKAP.  We compare the DW bound $z_D$ with the LP relaxation bound $z_L$ and the Gurobi root node bound (denoted by $z_R$) obtained by adding solver cuts. With some abuse of notation,  we use $z_D$ to denote  the best dual bound obtained by the level method. In theory, $z_D$ should be equal to the DW bound. However, numerical issues happen occasionally when solving the quadratic program in the level method, especially when the DW bound is close to the LP relaxation bound, in which case $z_D$ can be strictly less than $z_L$.

In Table \ref{Table:Comparison_DLR}, we present for MKAP the relative gaps $r_L:=(z_D-z_L)/|z_D|$ and $r_R:=(z_D-z_R)/|z_D|$ between $z_D$  and natural LP relaxation bound $z_L$ as well as the  Gurobi root bound $z_R$, respectively. The running time $t_R$ spent at the root node and running time $t_D$ spent on solving the Lagrangian dual problem by the level method are also reported.

\begin{table}[tb!]
	\scriptsize
	\centering
	\caption{Comparison of Different Bounds and Their Computing Time for MKAP}
	\label{Table:Comparison_DLR}
	\begin{tabular}{ccccccc}
		\toprule
		$|K|$ &$|M|$ &$|N|$ & $r_L$ ($\%$) & $r_R$ ($\%$) & $t_R$ (s) & $t_D$ (s)\\
		\midrule\midrule\addlinespace
		\phantom{0}2	&10	&\phantom{0}50	    &\phantom{-0}0.16	&\phantom{0}-0.04   &\phantom{0}0.1 &\phantom{0}0.8\\
		&	&100	&\phantom{-0}0.00	&\phantom{0}-0.03   &\phantom{0}0.3 &\phantom{0}2.0\\
		&	&200	&\phantom{-0}0.00	&\phantom{0}-0.01   &\phantom{0}0.5 &\phantom{0}9.9\\
		&	&300	&\phantom{-0}0.00	&\phantom{0}-0.01   & \phantom{0}0.5 &22.0\\
		&20	&\phantom{0}50	    &\phantom{-0}\textbf{1.98}	&\phantom{-0}0.08   &\phantom{0}0.4 &\phantom{0}0.7\\
		&	&100	&\phantom{-0}0.02	&\phantom{-0}0.00   &\phantom{0}0.3 &\phantom{0}2.8\\
		&	&200	&\phantom{-0}0.00	&\phantom{-0}0.00   &\phantom{0}0.6 &12.7\\
		&	&300	&\phantom{0}-0.01	&\phantom{0}-0.01   &\phantom{0}0.8 &37.0\\
		&30	&\phantom{0}50	    &\phantom{-}\textbf{12.05}	&\phantom{0}-0.06   &\phantom{0}0.1 &\phantom{0}0.6\\
		&	&100	&\phantom{-0}0.18	&\phantom{-0}0.10   &\phantom{0}0.4 &\phantom{0}2.8\\
		&	&200	&\phantom{-0}0.00	&\phantom{-0}0.00   &\phantom{0}0.8 &15.4\\
		&	&300	&\phantom{0}-0.05	&\phantom{0}-0.05   &\phantom{0}1.1 &49.7\\
		&40	&\phantom{0}50	    &\phantom{-}\textbf{39.11}	&\phantom{-0}0.00   &\phantom{0}0.0 &\phantom{0}0.7\\
		&	&100	&\phantom{-0}0.68	&\phantom{-0}0.18   &\phantom{0}1.9 & \phantom{0}2.7\\
		&	&200	&\phantom{-0}0.00	&\phantom{-0}0.00   &\phantom{0}0.9 &17.3\\
		&	&300	&\phantom{0}-0.34	&\phantom{0}-0.34   &\phantom{0}1.4 &55.3\\
		\phantom{0}5	&10	&\phantom{0}50	    &\phantom{-0}\textbf{1.27}	&\phantom{-0}0.44   &\phantom{0}0.3 &\phantom{0}0.6\\
		&	&100	&\phantom{-0}0.11	&\phantom{-0}0.07   &\phantom{0}0.2 &\phantom{0}2.4\\
		&	&200	&\phantom{-0}0.01	&\phantom{-0}0.00   &\phantom{0}0.4 &12.6\\
		&	&300	&\phantom{-0}0.00	&\phantom{-0}0.00   &\phantom{0}0.6 &43.0\\
		&20	&\phantom{0}50	    &\phantom{-0}\textbf{2.97}	&\phantom{-0}0.00   &\phantom{0}0.3 &\phantom{0}0.8\\
		&	&100	&\phantom{-0}0.10	&\phantom{-0}0.06   &\phantom{0}0.3 &\phantom{0}3.0\\
		&	&200	&\phantom{0}-0.44	&\phantom{0}-0.44   &\phantom{0}0.5    &12.9\\
		&	&300	&\phantom{0}-0.02	&\phantom{0}-0.02   &\phantom{0}0.8 &50.5\\
		&30	&\phantom{0}50	    &\phantom{-}\textbf{12.27}	&\phantom{-0}0.08   &\phantom{0}0.1 &\phantom{0}1.0\\
		&	&100	&\phantom{-0}0.41	&\phantom{-0}0.15   &\phantom{0}1.5 &\phantom{0}3.3\\
		&	&200	&\phantom{-0}0.01	&\phantom{-0}0.00   &\phantom{0}0.8 &17.7\\
		&	&300	&\phantom{-0}0.00	&\phantom{-0}0.00   &\phantom{0}1.2 &55.8\\
		&40	&\phantom{0}50	    &\phantom{-}\textbf{39.11}	&\phantom{-0}0.00 &\phantom{0}0.0 &\phantom{0}1.2\\
		&	&100	&\phantom{-0}1.00	&\phantom{-0}0.14 &\phantom{0}1.5 &\phantom{0}3.4\\
		&	&200	&\phantom{-0}0.01	&\phantom{-0}0.00   &\phantom{0}1.1 &23.1\\
		&	&300	&\phantom{0}-0.05	&\phantom{0}-0.05   &\phantom{0}1.5 &47.0\\
		\bottomrule
		
	\end{tabular}
	\quad
	\begin{tabular}{ccccccc}
		\toprule
		$|K|$ &$|M|$ &$|N|$ & $r_L$ ($\%$) & $r_R$ ($\%$) & $t_R$ (s) & $t_D$ (s)\\
		\midrule\midrule\addlinespace
		10 &10 &\phantom{0}50     &\phantom{-0}\textbf{7.03}   &\phantom{-0}0.86   &\phantom{0}0.2 &\phantom{0}0.5\\
		&	&100	&\phantom{-0}\textbf{5.65}  &\phantom{-0}\textbf{4.13}  &\phantom{0}0.2 &\phantom{0}1.2\\
		&	&200	&\phantom{-0}\textbf{3.53}  &\phantom{-0}\textbf{2.64}  &\phantom{0}0.3 &\phantom{0}4.5\\
		&	&300	&\phantom{-0}\textbf{3.64}  &\phantom{-0}\textbf{2.79}  &\phantom{0}0.5 &14.0\\
		&20 &\phantom{0}50     &\phantom{-0}\textbf{4.76}   &\phantom{0}-0.08   &\phantom{0}0.2 &\phantom{0}1.1\\
		&	&100	&\phantom{-0}0.74   &\phantom{-0}0.37   &\phantom{0}1.4 &\phantom{0}2.7\\
		&	&200	&\phantom{-0}0.02   &\phantom{-0}0.02   &\phantom{0}0.6 & 17.5\\
		&	&300	&\phantom{-0}0.00   &\phantom{-0}0.00   &\phantom{0}0.9 &45.5\\
		&30	&\phantom{0}50     &\phantom{-}\textbf{12.82}  &\phantom{-0}0.01   &\phantom{0}0.1 &\phantom{0}1.4\\
		&	&100	&\phantom{-0}0.88   &\phantom{-0}0.15   &\phantom{0}1.2 &\phantom{0}3.5\\
		&	&200	&\phantom{-0}0.02   &\phantom{-0}0.01   &\phantom{0}0.9 &22.8\\
		&	&300	&\phantom{-0}0.00   &\phantom{-0}0.00   &\phantom{0}1.3 &26.1\\
		&40	&\phantom{0}50	    &\phantom{-}\textbf{39.11}  &\phantom{-0}0.00   &\phantom{0}0.0 &\phantom{0}1.8\\
		&	&100	&\phantom{-0}\textbf{1.51}  &\phantom{-0}0.12   &\phantom{0}1.0 &\phantom{0}4.0\\
		&	&200	&\phantom{-0}0.04   &\phantom{-0}0.02   &\phantom{0}1.2 &26.6\\
		&	&300	&\phantom{-0}0.00   &\phantom{-0}0.00   &\phantom{0}1.8 &25.8\\
		25	&10	&\phantom{0}50     &\phantom{-}\textbf{43.05}  &\phantom{-0}0.21   &\phantom{0}0.1 &\phantom{0}0.9\\
		&	&100	&\phantom{-}\textbf{54.17} &\phantom{-0}\textbf{1.76}  &\phantom{0}0.3 &\phantom{0}1.2\\
		&	&200	&\phantom{-}\textbf{58.14} &\phantom{-}\textbf{13.64}  &\phantom{0}0.8 &\phantom{0}1.6\\
		&	&300	&\phantom{-}\textbf{66.02} &\phantom{-}\textbf{16.63}  &\phantom{0}1.5 &\phantom{0}2.1\\
		&20	&\phantom{0}50     &\phantom{-}\textbf{11.08}  &\phantom{0}-0.01   &\phantom{0}0.1 &\phantom{0}2.0\\
		&	&100	&\phantom{-0}\textbf{9.40}   &\phantom{-0}0.45   &\phantom{0}0.7 &\phantom{0}2.7\\
		&	&200	&\phantom{-0}\textbf{8.74}   &\phantom{-0}\textbf{6.83}  &\phantom{0}0.9 &\phantom{0}5.1\\
		&	&300	&\phantom{-}\textbf{10.18} &\phantom{-0}\textbf{8.78}  &\phantom{0}0.9 &\phantom{0}9.4\\
		&30	&\phantom{0}50     &\phantom{-}\textbf{14.35}  &\phantom{-0}0.00   &\phantom{0}0.0 &\phantom{0}2.8\\
		&	&100	&\phantom{-0}\textbf{3.80}   &\phantom{-0}0.13   &\phantom{0}1.0 &\phantom{0}4.2\\
		&	&200	&\phantom{-0}\textbf{1.79}   &\phantom{-0}0.85   &\phantom{0}7.0 &12.6\\
		&	&300	&\phantom{-0}\textbf{1.30}   &\phantom{-0}\textbf{1.29}  &\phantom{0}1.4 &28.3\\
		&40	&\phantom{0}50     &\phantom{-}\textbf{39.21}  &\phantom{-0}0.01   &\phantom{0}0.0 &\phantom{0}3.5\\
		&	&100	&\phantom{-0}\textbf{3.13}	&\phantom{-0}0.00   &\phantom{0}1.1 &\phantom{0}5.5\\
		&	&200	&\phantom{-0}0.75	&\phantom{-0}0.25   &11.5 &22.7\\
		&	&300	&\phantom{-0}0.25	&\phantom{-0}0.24   &\phantom{0}2.0 &24.8\\
		\bottomrule
	\end{tabular}
\end{table}

Depending on the instance size, the relative difference between the DW bound and the natural LP relaxation bound varies.
When the gap is small, Gurobi can often generate a root node bound that is almost as strong as or even slightly stronger than the DW bound. When the gap is large, Gurobi can close some gap at the root node but the bound is often significantly weaker than the DW bound. 

In our preliminary experiments, we observe that DWB cuts are more likely to be effective when DW bound is significantly stronger than the natural LP relaxation bound. Therefore, we focus on instance classes (defined by the size parameters $|K|$, $|M|$ and $|N|$) whose average natural LP relaxation bound is more than 1\% weaker than average DW bound, especially the ones with Gurobi root bound more than 1\% worse than DW bound. We denote these two sets of MKAP instances by $I_1$ and $I_2(\subseteq I_1)$, respectively. In Table \ref{Table:Comparison_DLR}, $I_1$ instances correspond to rows with bold $r_L$ and $I_2$ instances correspond to rows with bold $r_R$. Set $I_1$ consists of 870 instances and set $I_2$ consists of 270 instances. Over the $I_1$ instances, we observe that $z_D$ is very close to the DW bound with relative gaps always less than $0.01\%$ except on 3 instances (whose relative gaps are $0.03\%$, $0.16\%$ and $0.34\%$, respectively). 

We also observe that formulations with stronger strengthening tend to have less degenerate dual solutions. We report results regarding the dual degeneracy of optimal dual solutions associated with LP relaxations of different formulations in Appendix \ref{apd:dual_degen}. Time spent on generating these different formulations for MKAP is presented in Appendix \ref{apd:time_gen}.

\subsection{Objective Function Cut for MKAP}
We next emprically investigate how the objective function cut (formulation {\OBJ}) may increase the computing time on solving MKAP compared with simply using the original formulation {\MIP}.

\begin{figure}[bt!]
	\centering
	\caption{Relative Gaps Between Bounds and Optimal Value by {\MIP} (Left) and {\OBJ} (Right) for MKAP}
	\label{Fig:OBJ_LBUBs}
	\includegraphics[width=0.43\linewidth]{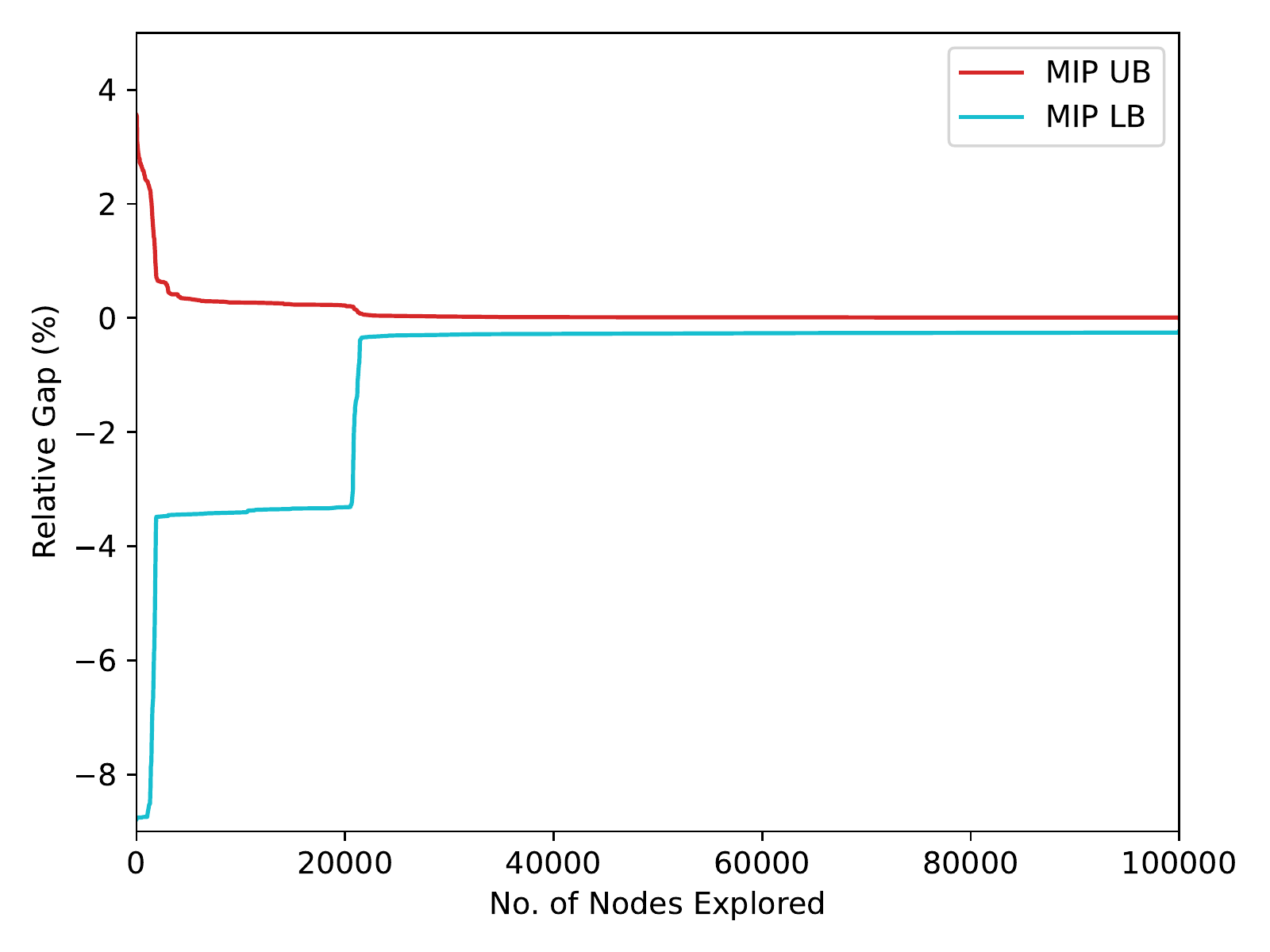}
	\includegraphics[width=0.43\linewidth]{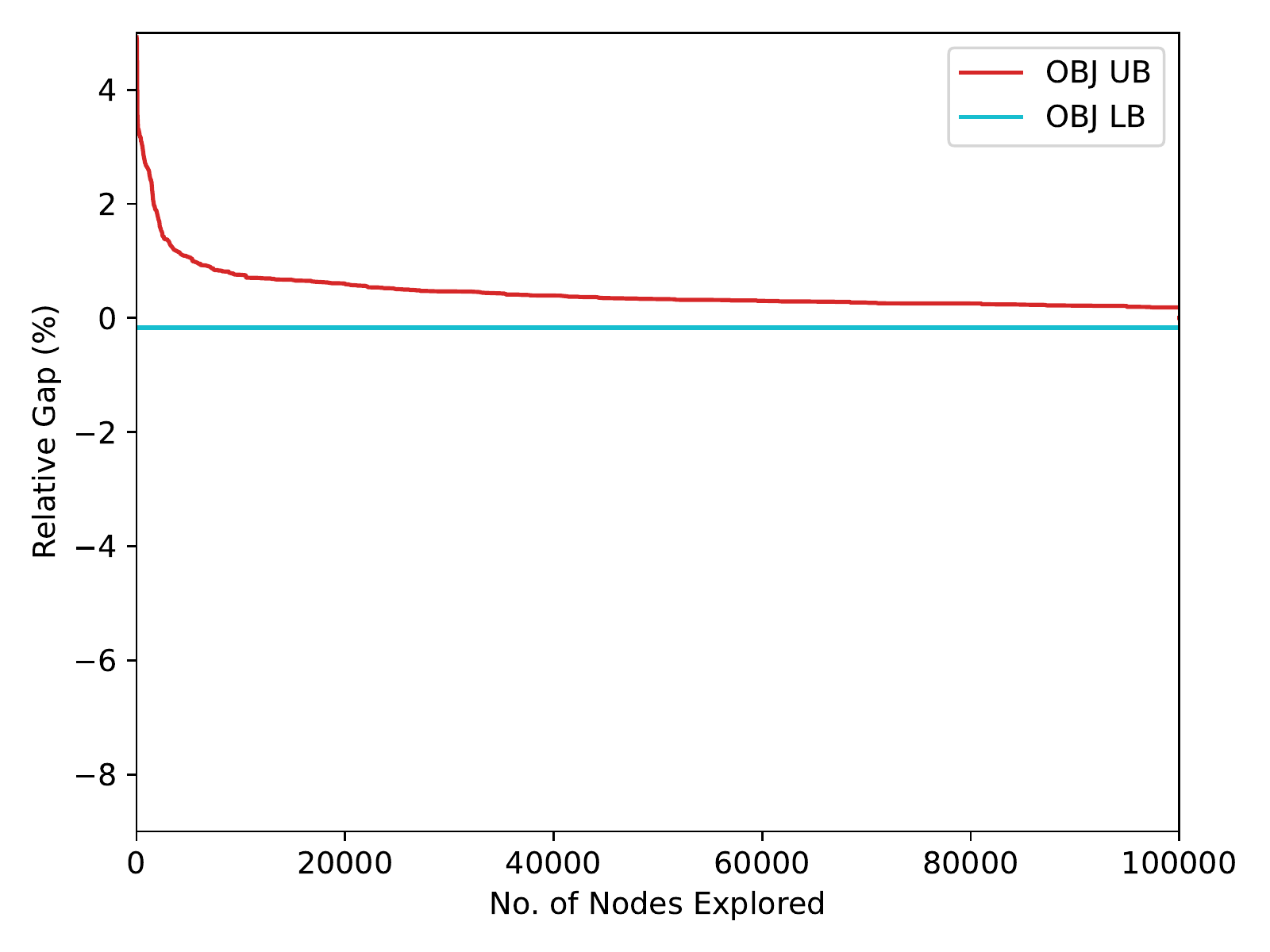}
\end{figure}

We consider MKAP instances with $(|K|,|M|,|N|)=(25,20,300)$, for which DW bound is much stronger than the natural LP relaxation bound (with average gap $10.18\%$). Out of these 30 instances, Gurobi can solve 22 using {\MIP} and 2 using {\OBJ}.
We plot in Figure \ref{Fig:OBJ_LBUBs} the average relative upper and lower bound gaps (with respect to the optimal value) obtained by Gurobi using formulations {\MIP} and {\OBJ} on $(25,20,300)$ instances as the number of branching nodes explored increases. We observe that although DW bound is almost equal to the optimal value for these instances (with a 0.01\% average relative gap), Gurobi can hardly produce good feasible solutions to improve the primal bound when using the {\OBJ} formulation. Instead, Gurobi can find better feasible solutions much more efficiently when using {\MIP}, where all optimal solutions are found after exploring a bit more than 20,000 nodes, leaving a few instances unsolved because Gurobi cannot prove optimality within the timelimit. This shows that a single objective function cut can already significantly influence the performance of Gurobi's primal heuristics, which is plausibly due to worse branching decisions and cut generation for {\OBJ}. A table reporting the average number of different cutting planes generated when using different formulations is presented in Appendix \ref{apd:nCuts}.

\subsection{MIP Experiments on MKAP}\label{subsec:BnC_MKAP}
\begin{table}[htb!]
	\centering
	\scriptsize
	\caption{Comparison of Gurobi Performance on MKAP $I_2$ Instances}
	\label{Table:BnC_I2}
	\begin{tabular}{crrrrrrrrrrrr}
		\toprule
		$(|K|,|M|,|N|)$ & \multicolumn{6}{c}{Number of Instances Solved} & \multicolumn{6}{c}{Average Solution Time (s)}\\
		\cmidrule(lr){2-7}\cmidrule(lr){8-13}
		& {\MIP} & {\OBJ} & {\DWB} & {\STR} & {\DkT{3} } & {\DkT{6} } & {\MIP} & {\OBJ} & {\DWB} & {\STR} & {\DkT{3} } & {\DkT{6} }\\
		\midrule\midrule\addlinespace
		(10,10,100) & 28/30 & 18/30 & \textbf{30}/30 & \textbf{30}/30 & \textbf{30}/30 & \textbf{30}/30 & $\geq$\phantom{0}65 & $\geq$298 & 2 & \textbf{0} & \textbf{0} & \textbf{0}\\
		(10,10,200) & 11/30 & 10/30 & \textbf{30}/30 & \textbf{30}/30 & \textbf{30}/30 & \textbf{30}/30 & $\geq$424 & $\geq$457 & 6 & \textbf{1} & 2 & 2\\
		(10,10,300) & 7/30 & 11/30 & \textbf{30}/30 & \textbf{30}/30 & \textbf{30}/30 & \textbf{30}/30 & $\geq$489 & $\geq$454 & 41 & \textbf{2} & 12 & 7\\
		(25,10,100) & \textbf{30}/30 & \textbf{30}/30 & \textbf{30}/30 & \textbf{30}/30 & \textbf{30}/30 & \textbf{30}/30 & \textbf{0} & 1 & \textbf{0} & \textbf{0} & \textbf{0} & \textbf{0}\\
		(25,10,200) & \textbf{30}/30 & \textbf{30}/30 & \textbf{30}/30 & \textbf{30}/30 & \textbf{30}/30 & \textbf{30}/30 & 2 & 25 & \textbf{0} & \textbf{0} & \textbf{0} & \textbf{0}\\
		(25,10,300) & \textbf{30}/30 & 29/30 & \textbf{30}/30 & \textbf{30}/30 & \textbf{30}/30 & \textbf{30}/30 & 6 & $\geq$\phantom{0}40 & \textbf{0} & \textbf{0} & 1 & 3\\
		(25,20,200) & 29/30 & 9/30 & \textbf{30}/30 & \textbf{30}/30 & \textbf{30}/30 & \textbf{30}/30 & $\geq$\phantom{0}44 & $\geq$499 & 3 & \textbf{2} & 1 & 1\\
		(25,20,300) & 22/30 & 2/30 & 29/30 & \textbf{30}/30 & \textbf{30}/30 & \textbf{30}/30 & $\geq$224 & $\geq$590 & $\geq$\phantom{0}48 & 5 & \textbf{3} & 7\\
		(25,30,300) & 1/30 & 0/30 & 0/30 & 2/30 & 1/30 & \textbf{3}/30 & $\geq$595 & $\geq$600 & $\geq$600 & $\geq$590 & $\geq$591 & $\geq$575\\
		\bottomrule
	\end{tabular}
\end{table}
\begin{table}[htb!]
	\centering
	\scriptsize
	\caption{Comparison of Gurobi Performance on MKAP $I_2$ Instances (Cont'd)}
	\label{Table:BnC_I2_condt}
	\begin{tabular}{crrrrrrrrrrrr}
		\toprule
		$(|K|,|M|,|N|)$ & \multicolumn{6}{c}{Average Optimality Gap (\%)} & \multicolumn{6}{c}{Average Number of Nodes}\\
		\cmidrule(lr){2-7}\cmidrule(lr){8-13}
		& {\MIP} & {\OBJ} & {\DWB} & {\STR} & {\DkT{3} } & {\DkT{6} } & {\MIP} & {\OBJ} & {\DWB} & {\STR} & {\DkT{3} } & {\DkT{6} }\\
		\midrule\midrule\addlinespace
		(10,10,100) & 0.04 & 0.03 & \textbf{0.00} & \textbf{0.00} & \textbf{0.00} & \textbf{0.00} & $\geq$\phantom{00}22761 & $\geq$1698156 & 4481 & 220 & 2 & \textbf{1}\\
		(10,10,200) & 0.27 & 0.09 & \textbf{0.00} & \textbf{0.00} & \textbf{0.00} & \textbf{0.00} & $\geq$2778880 & $\geq$3898948 & 6828 & 407 & 33 & \textbf{3}\\
		(10,10,300) & 0.21 & 0.09 & \textbf{0.00} & \textbf{0.00} & \textbf{0.00} & \textbf{0.00} & $\geq$2616971 & $\geq$2781174 & 131139 & 1208 & 631 & \textbf{6}\\
		(25,10,100) & \textbf{0.00} & \textbf{0.00} & \textbf{0.00} & \textbf{0.00} & \textbf{0.00} & \textbf{0.00} & 93 & 1123 & \textbf{1} & \textbf{1} & \textbf{1} & \textbf{1}\\
		(25,10,200) & \textbf{0.00} & \textbf{0.00} & \textbf{0.00} & \textbf{0.00} & \textbf{0.00} & \textbf{0.00} & 3853 & 20272 & \textbf{1} & \textbf{1} & \textbf{1} & \textbf{1}\\
		(25,10,300) & \textbf{0.00} & \textbf{0.00} & \textbf{0.00} & \textbf{0.00} & \textbf{0.00} & \textbf{0.00} & 8488 & $\geq$\phantom{00}25864 & \textbf{1} & \textbf{1} & \textbf{1} & \textbf{1}\\
		(25,20,200) & 0.02 & 0.08 & \textbf{0.00} & \textbf{0.00} & \textbf{0.00} & \textbf{0.00} & $\geq$\phantom{00}27987 & $\geq$\phantom{0}438310 & 2255 & 426 & 3 & \textbf{2}\\
		(25,20,300) & 0.24 & 0.17 & \textbf{0.00} & \textbf{0.00} & \textbf{0.00} & \textbf{0.00} & $\geq$\phantom{0}113353 & $\geq$\phantom{0}410728 & $\geq$\phantom{0}13861 & 929 & \textbf{1} & \textbf{1}\\
		(25,30,300) & 0.84 & 0.53 & 0.40 & 0.33 & 0.29 & \textbf{0.22} & $\geq$\phantom{00}81545 & $\geq$\phantom{0}379999 & $\geq$105115 & $\geq$35197 & $\geq$20036 & $\geq$6568\\
		\bottomrule
	\end{tabular}
\end{table}
We then conduct experiments on MKAP instances to compare Gurobi's performance on formulations {\MIP}, {\OBJ}, {\STR}, {\DkT{3} } and {\DkT{6} }. We set a 10-minute timelimit for each formulation. For $I_2$ instances, we summarize in Tables \ref{Table:BnC_I2} and \ref{Table:BnC_I2_condt} some statistics regarding the number of instances solved by different formulations, average solution time (excluding time for the Lagrangian dual problem and cut generation/strengthening), average ending optimality gap and average number of branching nodes needed to solve the instances. We observe that fewer instances are solved to optimality when using formulation {\OBJ} than formulation {\MIP}, which is consistent with the folklore that simply adding an objective function cut to improve dual bound is mostly ineffective. The ending optimality gap of {\OBJ} might be better than that of {\MIP} because {\OBJ} has a much stronger lower bound, whereas finding good feasible solutions can be hard for {\OBJ}. Without any strengthening, formulation {\DWB} already significantly outperforms both {\MIP} and {\OBJ}. For most instances, {\DkT{6} } requires the smallest number of branching nodes to solve the problem, while {\STR} has the best solution time. This can be explained by the fact that {\DkT{6} } has the strongest LP relaxation while {\STR} has a more compact formulation. Compared to {\DkT{3} } or {\DkT{6} }, the processing of each branching node takes less time for {\STR}. For the hardest $(25,30,300)$ instances, a stronger polyhedral relaxation becomes more important for closing the optimality gap, where {\DkT{6} } has the best performance. In Figure \ref{Fig:Compare_all} we plot gap closed as time or number of nodes explored increases for different methods on the hardest $(25,30,300)$ instances where the relative gap is computed by comparing with the best feasible solution found by all methods. We observe that although all methods except {\MIP} start from the same root bound (DW bound $z_D$), with coefficient strengthening and tilting the solver can find good primal solutions more efficiently. We also note that each branching node takes longer time to explore when coefficient strengthening and tilting are applied.
Additional tables comparing MIP solver performance on $I_1\setminus I_2$ instances are presented in Appendix \ref{apd:bnc_I2notI1}.

\begin{figure}[ht!]
	\centering
	\caption{Gap Closed as Time (Left) and Number of Nodes Explored (Right) Increases on a set of MKAP instances}
	\label{Fig:Compare_all}
	\includegraphics[width=0.43\linewidth]{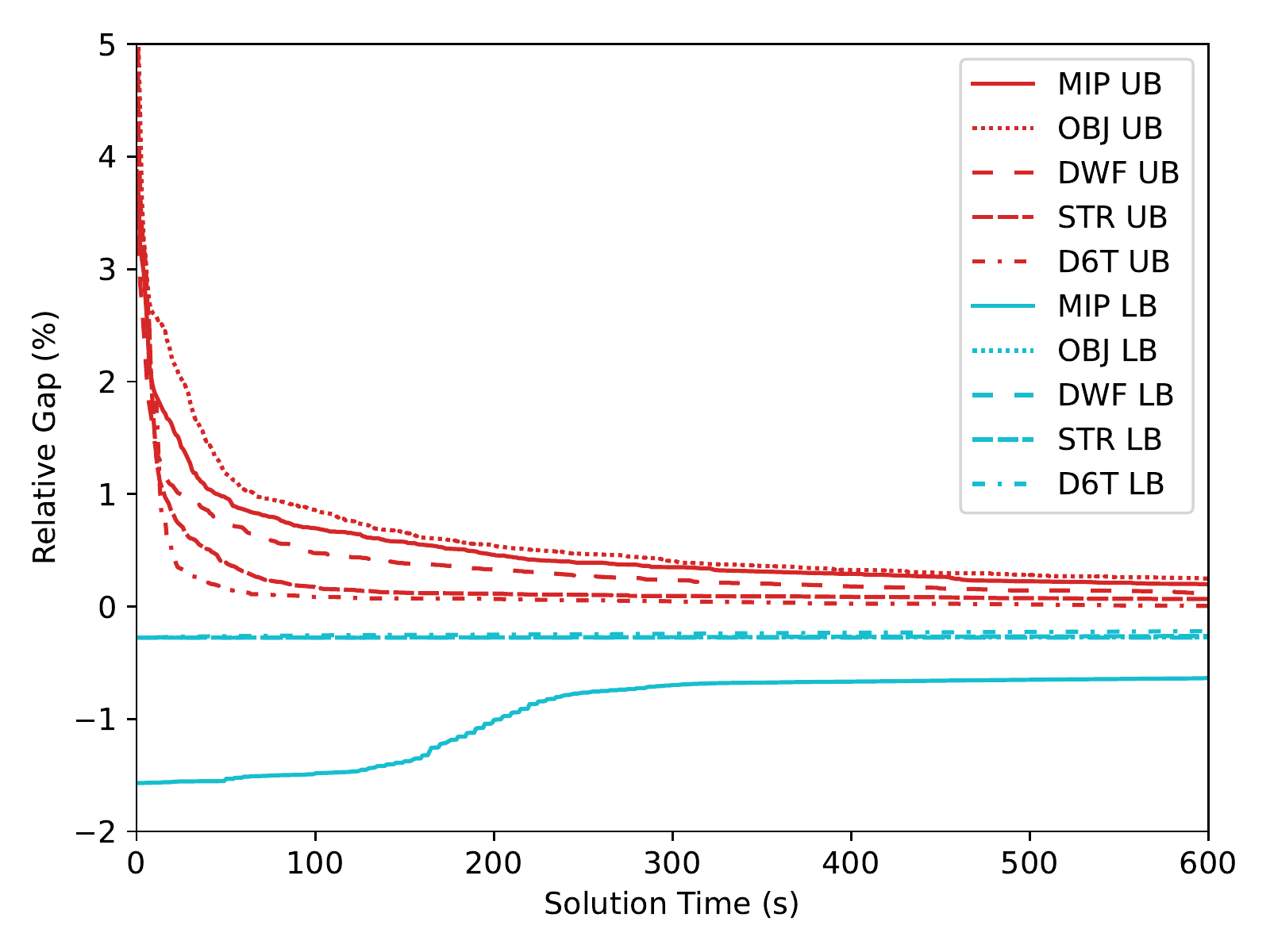}
	\includegraphics[width=0.43\linewidth]{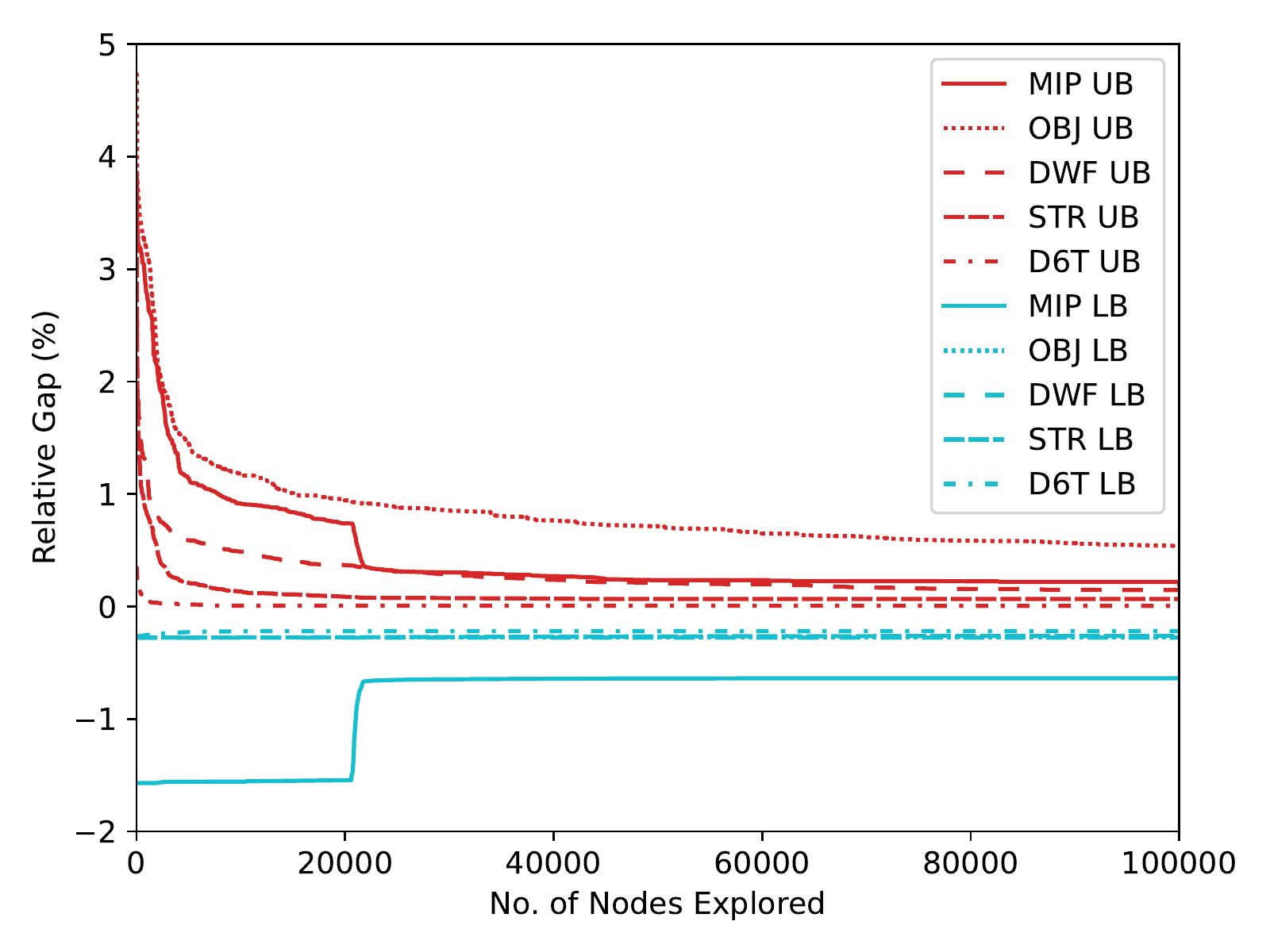}
\end{figure}

\subsection{MIP Experiments on TKP}
\label{sec:TKP}

Finally, in this section we report some experiments  conducted on hard TKP instances. In preliminary experiments, we observe that the tilting techniques introduced in Section \ref{subsec:tilting} are not very effective for TKP. This is mainly due to the fact that the DWB cuts after disjunctive coefficient strengthening are already defining high-dimensional faces (sometimes facets) of the block polyhedra for TKP. Therefore, we only present results regarding the performance of formulations {\MIP}, {\OBJ-$B$}, {\DWB-$B$} and {\STR-$B$} (defined at the beginning of Section \ref{sec:MKAP}) for $B\in\{32,64\}$, where $B$ indicates the number constraints used in each block to define the particular DW reformulation. Note that, unlike MKAP that has a loosely coupled block structure, the blocks in TKP's DW reformulations are overlapping, and in principle the DW relaxation associated with $B=64$ is stronger than the DW relaxation associated with $B=32$. It is also worth mentioning that we solve subproblems \eqref{Def:Dj}, \eqref{CT:subproblem0} and \eqref{CT:subproblem1} to exact optimality rather than the default 0.01\% optimality gap tolerance to avoid numerical issues.

\begin{table}[htb!]
	\centering
	\scriptsize
	\caption{Comparison of Gurobi Performance on TKP Instances}
	\label{Table:BnC_TKP}
	\begin{tabular}{crrrrrrrrrrrrrr}
		\toprule
	    Subclass & \multicolumn{7}{c}{Number of Instances Solved} & \multicolumn{7}{c}{Average Solution Time (s)}\\
		\cmidrule(lr){2-8}\cmidrule(lr){9-15}
		& {\MIP} & {\OBJ} & {\DWB} & {\STR}& {\OBJ} & {\DWB} & {\STR} & {\MIP} & {\OBJ} & {\DWB} & {\STR} & {\OBJ} & {\DWB} & {\STR}\\
        & & -32 & -32 & -32 & -64 & -64 & -64 & & -32 & -32 & -32 & -64 & -64 & -64\\
        \midrule\midrule\addlinespace
		XVIII & 5/10 & 0/10 & 2/10 & 5/10 & 0/10 & 1/10 & \textbf{8}/10 & $\geq360$ & $\geq600$ & $\geq520$ & $\geq381$ & $\geq600$ & $\geq567$ & $\geq264$\\
        XIX & \textbf{8}/10 & 0/10 & 4/10 & 7/10 & 0/10 & 5/10 & \textbf{8}/10 & $\geq232$ & $\geq600$ & $\geq412$ & $\geq266$ & $\geq600$ & $\geq457$ & $\geq160$\\
        XX & 6/10 & 0/10 & 2/10 & \textbf{8}/10 & 0/10 & 2/10 & 7/10 & $\geq296$ & $\geq600$ & $\geq516$ & $\geq287$ & $\geq600$ & $\geq557$ & $\geq212$\\
        \bottomrule
	\end{tabular}
\end{table}
\begin{table}[htb!]
	\centering
	\tiny
	\caption{Comparison of Gurobi Performance on TKP Instances (Cont'd)}
	\label{Table:BnC_TKP_condt}
	\begin{tabular}{crrrrrrrrrrrrrr}
		\toprule
		Subclass & \multicolumn{7}{c}{Average Optimality Gap (\%)} & \multicolumn{7}{c}{Average Number of Nodes}\\
		\cmidrule(lr){2-8}\cmidrule(lr){9-15}
		& {\MIP} & {\OBJ} & {\DWB} & {\STR}& {\OBJ} & {\DWB} & {\STR} & {\MIP} & {\OBJ-32} & {\DWB-32} & {\STR-32} & {\OBJ-64} & {\DWB-64} & {\STR-64}\\
        & & -32 & -32 & -32 & -64 & -64 & -64\\
        \midrule\midrule\addlinespace
        XVIII & 0.04 & 0.17 & 0.07 & 0.05 & 0.06 & 0.06 & \textbf{0.02} & $\geq$425495 & $\geq$2922669 & $\geq$1282107 & $\geq$\phantom{0}913574 & $\geq$2649095 & $\geq$1865134 & $\geq$949243\\
        XIX & \textbf{0.02} & 0.17 & 0.06 & 0.04 & 0.08 & 0.03 & \textbf{0.02} & $\geq$273020 & $\geq$2739441 & $\geq$\phantom{0}927412 & $\geq$1036383 & $\geq$2677712 & $\geq$1223015 & $\geq$574392\\
        XX & 0.04 & 0.17 & 0.08 & 0.04 & 0.10 & 0.07 & \textbf{0.03} & $\geq$315943 & $\geq$2449477 & $\geq$1054214 & $\geq$\phantom{0}849443 & $\geq$2469405 & $\geq$1839669 & $\geq$807173\\
		\bottomrule
	\end{tabular}
\end{table}


In Tables \ref{Table:BnC_TKP} and \ref{Table:BnC_TKP_condt}, we compare Gurobi performance of different formulations on TKP test instances, from the hard subclasses XVIII-XX in \cite{caprara2013uncommon}, with a 10-minute timelimit for running each formulation (excluding the time spent on generating the formulation). {We want to emphasize here that, unlike for MKAP, time spent on generating some formulations for TKP is not negligible.} Time spent on generating these different formulations is presented in Appendix \ref{apd:time_gen_TKP}. We observe that, although formulation {\DWB}-$B$ is not as effective as {\MIP} for $B\in\{32,64\}$, {\STR-32} is comparable with {\MIP} and {\STR-64} outperforms {\MIP} in terms of number of instances solved and average final optimality gap. We also note that the performance of formulations {\OBJ-$B$}, {\DWB-$B$} and {\STR-$B$} relatively increases as $B$ increases, illustrating the benefit of using DWB cuts generated from a tighter DW relaxation. Comparing {\DWB-$B$} and {\STR-$B$}, we observe a significant improvement in performance when coefficient strengthening is applied. This can be attributed to the fact that the TKP instances have relatively few blocks (32 blocks for $B=32$ and 16 blocks for $B=64$), in which case the last-iteration DWB cuts without strengthening would likely lead to a formulation with relatively high dual degeneracy.

\section{Hybrid Implementation in a Multi-Thread Context}
\label{sec:hybrid}
Computationally, using the last iteration DWB cuts (and their strengthened versions) does not always lead to a performance improvement as it $(i)$ requires additional computational effort to generate the cuts and $(ii)$ increases the size of the formulation. Similar to other MIP techniques, one needs to decide whether or not to use this approach for a given problem instance. Machine learning (ML) techniques have been widely investigated to make algorithmic decisions inside MIP solvers \cite{berthold2022learning,bonami2022classifier}.

In the remainder of this section, we present some preliminary experimental results with  a simple ML model to predict whether or not one may benefit from  DWB cuts on MKAP $I_1$ instances. We consider a computational environment with 4 threads where one starts with running the MIP solver on {\MIP} (using 3 threads) and solves the Lagrangian dual problem (using 1 thread) in parallel until the Lagrangian dual problem is solved. After this first phase, one decides to either allocate all 4 threads to the original formulation {\MIP} or to the strengthened formulation {\STR}.

To collect labels for the training phase, we use 20\% of MKAP $I_1$ instances as the training set. We first solve the original formulation (using 3 threads) and Lagrangian dual (using 1 thread) in parallel until the Lagrangian dual computation is completed, and collect some simple features for each instance. We then separately try allocating all 4 threads to (i) solving the original formulation {\MIP}, and (ii) solving the strengthened formulation {\STR}.
We label an instance as promising if one of the following holds when both methods are run for 10 minutes:
\begin{enumerate}
	\item The instance can be solved with {\STR} but not with {\MIP}, or,
	\item Neither method can solve the instance, and {\STR} has a smaller optimality gap (at least 0.01\% smaller), or,
	\item Both methods can solve the instance and {\STR} reduces the solution time by  10\% or more.
\end{enumerate}

Using this labeled data, we train a simple decision stump (depth-1 decision tree) to determine whether or not we should restart the MIP using the formulation {\STR}. The features we use are composites of $z_L$, $z_D$, $z_R$, $z_{\text{LB}}$ and $z_{\text{UB}}$, where $z_{\text{LB}}$ and $z_{\text{UB}}$ denote the lower and upper bounds obtained by the solver from solving the original formulation until the Lagrangian dual problem is solved. In our experiments, $z_{\text{UB}}$ is always finite as the solver can easily find out that the all-zero solution is feasible for MKAP.
The resulting  decision stump obtained after training is as follows:\begin{center}
	``If $(z_{D}-z_{\text{LB}})/z_{\text{UB}} >0.05\%$, then switch to the strengthened formulation {\STR}."
\end{center}

Instances that can be solved using the original MIP formulation before we finish computing $z_D$ are dropped from the training set and the test set as there is no algorithmic decision of interest to make in that situation. This leaves us 62 training instances and 242 test instances. 

Next, we compare a hybrid implementation with default Gurobi. To further simplify the implementation, we replace the parallelization of the solution of the original formulation and of the Lagrangian dual by a simulated parallelization. Such a simulated hybrid implementation is described below:
{\begin{itemize}
	\item[\bf Step 1.]\label{step:simulated1}
	\begin{itemize}
		\item[(a)] With 1 thread, solve problem \eqref{Lag_dual} with  level method. Store  solution time $t_D$.
		\item[(b)] With 3 threads, run a  MIP solver using  original MIP formulation with timelimit $t_D$. 
	\end{itemize}
	\item[\bf Step 2.] If the MIP is solved to optimality in Step 1, then stop.
	
	Else if $(z_{D}-z_{\text{LB}})/z_{\text{UB}} >0.05\%$, then restart the MIP solver with all 4 threads solving formulation {\STR} and using the best primal solution (bound) obtained in 1(b). 
	
	Else, allocate all $4$ threads to the MIP solver and continue with the original formulation.
\end{itemize}
}

\begin{table}[bt!]
	\centering
	\caption{Comparing {\MIP}, {\STR} and {\HYB} on MKAP Test Instances}
	\footnotesize
	\label{Table:multicore_compare}
	\begin{tabular}{lccc}
		\toprule
		& {\MIP} & {\STR} & {\HYB} \\
		\cmidrule{2-4}
		\text{Number of Instances Solved} & 167/242 & 206/242 & 208/242\\
		\text{Average Optimality Gap (\%)} & 0.18\% & 0.05\% & 0.05\%\\
		\text{Average Solution Time (s)} & $\geq 223$ & $\geq 117$ & $\geq 110$\\
		\bottomrule
	\end{tabular}
\end{table}

In Table \ref{Table:multicore_compare}  we summarize results obtained on MKAP by directly solving the original formulation (`{\MIP}'), directly generating and solving the strengthened formulation (`{\STR}'), and applying our simulated hybrid implementation (`{\HYB}'). For each method, we report the number of instances solved within the timelimit  of 10 minutes, the ending optimality gap  and average time spent on solving the problem including cut generating time for {\STR} and {\HYB} in seconds. Note that {\STR} is likely to outperform {\MIP} on the test instances because the DW bound is already significantly stronger than the LP relaxation bound on those instances. We observe that both {\STR} and {\HYB} outperform {\MIP} on the test instances while {\HYB} slightly outperforms {\MIP} in terms of solution time. This is because {\HYB} avoids solving the more expensive {\STR} formulation in some cases when the bound provided by {\STR} is not significantly stronger than {\MIP}. Another interesting observation is that {\HYB} solves exactly all instances that can be solved by either {\MIP} or {\STR}, including two instances that cannot be solved by {\STR} but can be solved by {\MIP}, demonstrating the value of having the flexibility of switching between different formulations. We believe this hybrid version would be relatively easy to implement by modern MIP solvers.

\section{Conclusions}
\label{sec:end}
In this paper, we develop practical methods to generate and strengthen cutting planes that one can derive from DW relaxation of MIPs with block structures. Numerical experiments show that adding these cutting planes is effective in cases when the DW bound is strong. We also describe how to incorporate our methodology into a MIP solver when multiple threads are available.

Empirically, we observe that adding too many cuts to the formulation can slow down LPs significantly. A potential fix is to consider cut selection within our cut generation approach. One may also consider adaptive tilting schemes rather than tilting each inequality to the same depth. We also observe that the Lagrangian dual problem often has nonunique optimal solutions. It is therefore interesting to investigate whether using alternative dual solutions can lead to stronger cuts.

\section*{Acknowledgements}
We are indebted to two anonymous referees whose detailed comments helped us to significantly improve the paper.

\bibliography{ref}
\bibliographystyle{ieeetr}

\newpage

%
%
%

\appendix
	\section{Dantzig-Wolfe Decomposition}\label{apd:algs}
	\begin{algorithm}[ht!]
		\scriptsize
		\begin{algorithmic}[1]
			\Initialize{$\hat{V}^j\gets$ a subset  of extreme points of $\conv(Q^j)$, $j=1,2,\ldots,q$\\
				$\hat{R}^j\gets$ a subset of extreme rays of $\conv(Q^j)$, $j=1,2,\ldots,q$\\ $t\gets 0$}
			\State $t\gets t+1$, solve\begin{equation}\label{DWmaster}
				\begin{alignedat}{3}
					{z}_D^t=\min\ & c^\top x\\
					\text{s.t. }&x_{I(j)}=\sum_{v\in \hat{V}^j}v\lambda^j_v+\sum_{r\in\hat{R}^j}r\mu_r^j,\quad &&j\in J,\quad&&(\pi^j)\\
					&Ax\geq b,\quad&&&&(\beta)\\
					&\sum_{v\in\hat{V}^j }\lambda^j_v=1,\quad &&j\in J,\quad&&(\theta_j)\\
					&\lambda^j\geq 0,~\mu^j\geq 0,\quad&& j\in J
				\end{alignedat}
			\end{equation}
			(assume $\{\hat{V}^j\}_{j=1}^{q}$ and $\{\hat{R}^j\}_{j=1}^{q}$ are initialized such that \eqref{DWmaster} is always feasible) \label{iter_start}
			\State let $(\pi^{1},\ldots,\pi^{q},\theta_{1},\ldots,\theta_{q})$ denote the  values of optimal dual variables for \eqref{DWmaster}\label{algline:def_pisig}
			\For{$j=1,2,\ldots,q$}
			\State solve the following pricing problem:\begin{equation}\label{DWpricingsub}
				D_j(\pi^{j}):=\min\{(\pi^{j})^\top v:v\in \conv(Q^j) \}=\min\{(\pi^{j})^\top v:v\in Q^j \}.
			\end{equation}
			\If{the pricing problem \eqref{DWpricingsub} is bounded}
			\State let $v^{j}$ denote an optimal solution
			\State $\zeta^{j}\gets(\pi^{j})^\top v^{j}-\theta_{j}$
			\If{$\zeta^{j}<0$}
			\State $\hat{V}^j\gets \hat{V}^j\cup\{v^{j}\}$\EndIf
			\Else 
			\State $\zeta^{j}\gets -\infty$
			\State let $r^{j}$ denote an extreme ray of $\conv(Q^j)$ with $(\pi^{j})^\top r^{j}<0$
			\State$\hat{R}^j\gets\hat{R}^j\cup\{r^{j}\}$
			\EndIf
			\If{$\zeta^{j}\geq 0$ for all $j\in J$}
			\State \Return $z_D={z}_D^t$
			\Else 
			\State \Goto{iter_start}
			\EndIf
			\EndFor 
		\end{algorithmic}
		\caption{Standard Dantzig-Wolfe Decomposition}
		\label{alg:DW}
	\end{algorithm}

	\section{The Level Method for the Dual Problem}\label{apd:level}
	
	\begin{algorithm}[ht!]
		\scriptsize
		\begin{algorithmic}[1]
			\Initialize{$\hat{V}^j\gets\emptyset$, $\hat{R}^j\gets\emptyset$, $j=1,2,\ldots,p$\\
				$\bar{z}\gets$ an upper bound of $z_D$\\
				LB $\gets -\infty$, UB $\gets \infty$, $t\gets 0$}
			\State \textbf{Main Loop:} $t\gets t+1$, \textbf{solve:}\begin{subequations}\label{Lag_master}
				\begin{alignat}{2}
					\text{UB}\gets\max\ &\sum_{j=1}^q \theta_j+b^\top \beta\\
					\text{s.t. } &\theta_j\leq v^\top \pi^j,&v\in\hat{V}^j,~j\in J,\label{dual_con1}\\
					&r^\top \pi^j\geq 0,&r\in\hat{R}^j,~j\in J,\label{dual_con2}\\
					&\sum_{j=1}^q \theta_j+b^\top\beta\leq \bar{z},\label{dual_con3}\\
					&\sum_{j:i\in I(j)}\pi^j_i+\beta^\top A_i=c_i, &i=1,\ldots,n,\label{dual_con4}\\
					&\beta\geq 0.\label{dual_con5}
				\end{alignat}
			\end{subequations}
			\label{level_iter_start}
			\If{$LB=-\infty$} 
			\State $(\bar{\pi},\bar{\beta})\gets$ optimal value of $(\pi,\beta)$ in \eqref{Lag_master}
			\Else
			\State \textbf{solve: }\begin{equation}\label{level_QP}
				\begin{alignedat}{3}
					\min\ &\big\|\big(\pi-\bar{\pi},\beta-\bar{\beta}\big)\big\|_2^2\\
					\text{s.t. }&\sum_{j=1}^q \theta_j+b^\top \beta\geq 0.7\cdot\text{UB}+0.3\cdot\text{LB}\\
					&\eqref{dual_con1}-\eqref{dual_con5}
				\end{alignedat}
			\end{equation}
			\State $(\bar{\pi},\bar{\beta})\gets$ optimal value of $(\pi,\beta)$ in \eqref{level_QP}
			\EndIf
			\For{$j=1,2,\ldots,q$}
			\State \textbf{solve} \eqref{Def:Dj} for $\pi^j=\bar{\pi}^j$
			\If{\eqref{Def:Dj} is bounded}
			\State let $v^j$ denote an optimal solution
			\State $\hat{V}^j\gets\hat{V}^j\cup\{v^j\}$
			\Else
			\State let $r^j$ denote an extreme ray of $\conv(Q^j)$ with $(\pi^j)^\top r^j<0$
			\State $\hat{R}^j\gets\hat{R}^j\cup\{r^j\}$
			\EndIf
			\EndFor
			\State LB $\gets\max\big\{\text{LB},\sum_{j=1}^q D_j(\bar{\pi}^j)+b^\top\bar{\beta}\big\}$
			\If{UB-LB is small enough}
			\State \Return LB
			\Else
			\State \Goto{level_iter_start}
			\EndIf
		\end{algorithmic}
		\caption{The Level Method for Solving \eqref{Lag_dual}}
		\label{alg:level}
	\end{algorithm}
	The level method adds on top of the cutting plane method a regularization step that requires solving a convex quadratic program to find a promising candidate multiplier $(\pi,\beta)=(\bar{\pi},\bar{\beta})$ to explore while staying close to the previous multiplier that is already explored. A pseudocode of (the multi-cut version of) the level method is given in Algorithm \ref{alg:level}. For generating the upper bound $\bar{z}$, we give the original formulation to the solver. We pick the second feasible solution $\bar{x}$ found by the solver (often much better than the first feasible solution) and set $\bar{z}=c^\top\bar{x}$. Constraint \eqref{dual_con3} is important in early iterations of the algorithm to ensure that problem \eqref{Lag_dual} is bounded, but becomes redundant when UB$<\bar{z}$ in later iterations of the algorithm. Within the level method, LB and UB store the best lower and upper bounds of $z_D$ found by the algorithm. If we set the termination condition to be UB-LB=0, then the algorithm returns the exact DW bound $z_D$. To avoid some numerical issues, when implementing the level method we terminate the algorithm if difference between LB and UB is within $10^{-6}$ relative tolerance, or if the solver fails to solve the quadratic master problem \eqref{level_QP}. We observe that the second case rarely happens but can sometimes lead to a Lagrangian dual bound weaker than the natural LP relaxation bound since we do not solve the Lagrangian dual problem to optimality. 
	
	\section{Examples}\label{apd:exmps}
	\begin{exmp}\label{exmp:intermediate_cuts}
		Consider the following MIP:\begin{align*}
			z^*=\min\ &x_1+x_2+2x_3+2x_4\\
			\text{s.t. } & x_2+x_4\geq 3,\\
			& 3x_1+x_2+3x_3+x_4\geq -12,\\
			& 0.5\leq x_i\leq 2.5,~x_i\in\bZ,~i=1,2,3,4.
		\end{align*}
		Define $I_1=\{1,2\}$, $I_2=\{3,4\}$, $Q^j=\{y\in\bZ^2:0.5\leq y_1\leq 2.5,0.5\leq y_2\leq 2.5\}$ for $j\in\{1,2\}$ and the linking constraints $Ax\geq b$ to be
		$$\left[\begin{array}{cc}			0  & 1 \\			3  & 1		\end{array}\right]
		\left[\begin{array}{cc}			x_1 \\			x_2		\end{array}		\right]
		+\left[\begin{array}{cc}			0  & 1 \\			3  & 1		\end{array}\right]
		\left[\begin{array}{cc}			x_3 \\			x_4		\end{array}		\right]
		\geq \left[\begin{array}{cc}			3 \\			12		\end{array}		\right].$$ 
		In this example, the MIP has symmetric blocks but asymmetric objective coefficients.
		The LP relaxation of the problem gives the lower bound $7$. Let $\pi(1)=(1,1/4,2,5/4)^\top $, $\beta(1)=(3/4,0)^\top $, $\pi(2)=(2/11,8/11,13/11,19/11)^\top $, $\beta(2)=(0,3/11)^\top $. Note that the dual multipliers $\big(\pi(\tau),\beta(\tau)\big)_{\tau\in\{1,2\}}$ satisfy the assumptions in Proposition \ref{prop:dual_bnd}. The best dual bound is given by\begin{displaymath}
			\max_{\tau\in\{1,2\}}z\big(\pi(\tau),\beta(\tau)\big)=\max\{27/4,78/11\}=78/11.
		\end{displaymath}
		Adding DWB cuts associated with $\pi(1)$ and adding DWB cuts associated with $\pi(2)$ into the LP relaxation of the original formulation yield bounds $125/16$ and $149/19$, respectively. These bounds are stronger than the corresponding Lagrangian relaxation bounds $27/4$ and $78/11$. On the other hand, the LP\begin{displaymath}
			\begin{aligned}
				\min\ &c^\top x\\
				\text{s.t. }&\big(\pi^j(\tau)\big)^\top x_{I(j)}\geq D_j\big(\pi^j(\tau)\big),\quad j\in J,~\tau=1,2,\\[.2cm]
				&Ax\geq b
			\end{aligned}
		\end{displaymath}
		has the optimal objective value equal to $8$, which happens to be $z^*$ in this example.
	\end{exmp}
	\begin{exmp}\label{exmp:unique_primal}
		Consider the following primal and dual LP pair:\begin{displaymath}
			\begin{alignedat}{2}
				(\text{Primal}):~\min~&&x_1-x_2\\
				\text{s.t.}~&&x_1+x_2&\geq 1,\\
				&&x_2&\geq 1,\\
				&&-x_2&\geq -1;
			\end{alignedat}
			\quad\begin{alignedat}{2}
				(\text{Dual}):~\max~&\myalpha_1+\myalpha_2-\myalpha_3\\
				\text{s.t.}~&\myalpha_1&&=1,\\
				&\myalpha_1+\myalpha_2-\myalpha_3&&=-1,\\
				&\myalpha\geq 0.
			\end{alignedat}
		\end{displaymath}
		In this example, the primal LP has a unique optimal solution $x=(0,1)^\top$. Therefore, the dimension of the optimal face is 0. Also, the problem has a unique dual \emph{basic} solution $\myalpha=(1,0,2)^\top$, whose 0-norm is 2, which is strictly less than $n$ minus the dimension of the optimal face. However, note that all dual solutions of the form $\myalpha=(1,\lambda,2+\lambda)^\top$ with $\lambda\geq 0$ are optimal for the dual LP, i.e., the problem has nonunique dual optimal solutions.
	\end{exmp}
	
	\section{MKAP and Test Instances} \label{apd:MKAP}
	In MKAP, there are a finite number of items, knapsacks and item classes. Each item belongs to exactly one item class. The decision maker has to pack a subset of items into knapsacks, subject to the constraints that only items belonging to the same item class with a total weight no larger than the capacity of the knapsack can be packed into the same knapsack, with the aim of maximizing the total profit of the packed items. Specifically, let $N$, $M$ and $K$ denote the index sets of items, knapsacks and item classes, respectively. For $j\in N$, let $p_j$ denote the profit of item $j$ and $w_j$ denote the weight of item $j$. For $i\in M$, let $C_i$ denote the capacity of knapsack $i$. For $k\in K$, let $S_k$ denote the set of items that belong to item class $k$, i.e., $\{S_k\}_{k\in K}$ is a partition of $N$. We use variables $x\in\{0,1\}^{M\times N}$ to represent the packing decisions, where $x_{ij}=1$ if and only if item $j$ is packed into knapsack $i$. Variables $y\in\{0,1\}^{M\times K}$ represent the assignment decisions, where $y_{ik}=1$ if and only if knapsack $i$ is used to pack items from item class $k$. MKAP can then be formulated as the following integer program (we flip the sign of the objective function to formulate it as a minimization problem):\begin{subequations}\label{MKAP}
		\begin{alignat}{2}
			\min~&-\sum_{i\in M}\sum_{j\in N}p_jx_{ij}\\
			\text{s.t. }&\sum_{j\in S_k}w_jx_{ij}\leq C_iy_{ik},&&i\in M,~k\in K,\label{MKAP:knapsack}\\
			&\sum_{i\in M}x_{ij}\leq 1,&&j\in N,\label{MKAP:assignment}\\
			&\sum_{k\in K}y_{ik}\leq 1,&&i\in M,\label{MKAP:class}\\
			&x\in\{0,1\}^{M\times N},~y\in\{0,1\}^{M\times K}.~~
		\end{alignat}
	\end{subequations}
	
	It has been observed in \cite{di2021new} that applying Lagrangian relaxation to \eqref{MKAP} with constraints \eqref{MKAP:assignment} and \eqref{MKAP:class} dualized can lead to a dual bound potentially much stronger than the LP relaxation bound. By equivalence between DW decomposition and Lagrangian relaxation, this observation also applies to DW decomposition. Specifically, we define $|M|\times|K|$ blocks in our DW decomposition, where for each $i\in M$ and $k\in K$ block $Q^{i,k}$ is defined by a knapsack constraint $\sum_{j\in S_k}w_jx_{ij}\leq C_iy_{ik}$ together with constraints forcing $(x_{ij})_{j\in S_k}$ and $y_{ik}$ to be binary.
	
	We generate the instances following the scheme of \cite{kataoka2014upper} but using different instance sizes. We consider instances with $|K|\in\{2,5,10,25\}$, $|M|\in\{10,20,30,40\}$ and $|N|\in\{50,100,200,300\}$. For each combination $(|K|,|M|,|N|)$ of the parameters, we generate three types (uncorrelated, weakly correlated, strongly correlated) of MKAP instances with 10 instances generated using 10 different random seeds for each type. We refer readers to \cite{kataoka2014upper} for a more detailed description of the instance generation procedure. It is worth mentioning that the item classes $\{S_k\}_{k\in K}$ all have equal sizes $|N|/|K|$ for our test instances.

     \section{TKP and Test Instances}\label{apd:TKP}
    TKP is defined by a set of items $N:=\{1,2,\ldots,n\}$. Each item $i\in N$ is associated with a profit $p_i$, a weight $w_i$, and a time period $[s_i,t_i)$ during which the item would be active. The decision maker decides to pack a subset of the items into the knapsack, so that at any time point, the total weight of the active items selected is at most $C$. We use variables $x\in\{0,1\}^N$ to represent the packing decision, where $x_i=1$ if and only if item $i\in N$ is packed into the knapsack. Let $S_j:=\{i:s_i\leq s_j,~t_i>s_j\}$ be the set of items that are active at time $s_j$ for $j\in N$. Then TKP can be formulated as the following integer program (we flip the sign of the objective function to formulate it as a minimization problem):
    \begin{subequations}\label{TKP}
        \begin{alignat}{2}
            \min~&-\sum_{i\in N} p_ix_i\\
            \text{s.t. }&\sum_{i\in S_j}w_ix_i\leq C,\quad&j\in N,\label{TKP:knapsack_constraints}\\
            &x\in\{0,1\}^N.
        \end{alignat}
    \end{subequations}

    \cite{caprara2013uncommon} suggest nonstandard DW reformulations of \eqref{TKP} by partitioning the constraints \eqref{TKP:knapsack_constraints} into different blocks, leading to the following  DW reformulation:
    \begin{subequations}\label{TKP:DW_reform}
        \begin{alignat}{2}
            \min~&-\sum_{i\in N} p_ix_i\\
            \text{s.t. }&x_{I(k)}\in \conv(Q^k),\quad&k\in K,\\
            &x\in\{0,1\}^N.
        \end{alignat}
    \end{subequations}
    where $Q^k$ is defined by $B$ consecutive constraints in the original problem and $I(k)$ contains the indices of the variables that appear in these constraints. Specifically, for $k\in K:=\{1,\ldots,\lceil n/B\rceil\}$, we have $I(k)=\bigcup_{j\in N_k}S_j$ where $N_k=\big\{(k-1)B+1,(k-1)B+2,\ldots,\min\{kB,n\}\big\}$, and $Q^k=\{x_{I(k)}\in\{0,1\}^{I(k)}:\sum_{i\in S_j}w_ix_i\leq C,j\in N_k\}$.
    
    As anticipated, according to empirical results in \cite{caprara2013uncommon}, the strength of the associated DW bound tends to increase as $B$ grows. In our numerical experiments, we consider two DW reformulations with group sizes $B=32$ and $B=64$, respectively. Since current commercial solvers (Gurobi in particular) can easily solve the original formulation for most test instances in \cite{caprara2013uncommon}, we consider only the 30 relatively hard instances from groups XVIII-XX in \cite{caprara2013uncommon}.

    \section{Dual Degeneracy of Different Formulations for MKAP}\label{apd:dual_degen}
    
    \begin{table}[tb!]
    	\scriptsize
    	\centering
    	\caption{Relative Degeneracy Levels of LP Optimal Dual Solutions for Different MKAP Formulations}
    	\label{Table:Comparison_OptFace}
    	\begin{tabular}{rrrrrrrrrrr}
    		\toprule
    		$(|K|,|M|,|N|)$ & \multicolumn{10}{c}{$(1-\|\myalpha^*\|_0/n)\times 100\%$}\\
    		\cmidrule(lr){2-11}
    		& \MIP & \OBJ & \DWB & \STR & \DkT{1} & \DkT{2} & \DkT{3} & \DkT{4} & \DkT{5} & \DkT{6} \\
    		\midrule\midrule\addlinespace
    		(\phantom{0}2,20,\phantom{0}50) & 56.27\% &	99.90\% &	78.30\% &	41.85\% &	36.37\% &	30.94\% &	25.77\% &	23.87\% &	21.86\% &	21.88\% \\
    		(\phantom{0}2,30,\phantom{0}50) & 57.75\% &	99.94\% &	70.21\% &	52.25\% &	46.32\% &	41.81\% &	39.24\% &	39.52\% &	39.89\% &	39.26\% \\
    		(\phantom{0}2,40,\phantom{0}50) & 58.06\% &	99.95\% &	71.04\% &	55.17\% &	51.67\% &	52.29\% &	51.82\% &	50.46\% &	50.46\% &	50.46\% \\
    		(\phantom{0}5,10,\phantom{0}50) & 50.84\% &	99.82\% &	77.44\% &	42.29\% &	32.33\% &	22.32\% &	14.84\% &	11.09\% &	11.25\% &	11.04\% \\
    		(\phantom{0}5,20,\phantom{0}50) & 53.20\% &	99.91\% &	68.12\% &	36.42\% &	28.41\% &	22.50\% &	20.10\% &	19.95\% &	19.04\% &	19.42\% \\
    		(\phantom{0}5,30,\phantom{0}50) & 54.60\% &	99.94\% &	65.54\% &	45.31\% &	34.81\% &	34.89\% &	35.45\% &	36.05\% &	36.11\% &	36.11\% \\
    		(\phantom{0}5,40,\phantom{0}50) & 54.90\% &	99.95\% &	70.72\% &	55.61\% &	51.21\% &	50.62\% &	50.61\% &	50.61\% &	50.61\% &	50.61\% \\
    		(10,10,\phantom{0}50) & 46.60\% &	99.83\% &	57.63\% &	26.99\% &	11.24\% &	\phantom{0}3.01\% &	\phantom{0}3.35\% &	\phantom{0}3.46\% &	\phantom{0}3.46\% &	\phantom{0}3.46\% \\
    		\textbf{(10,10,100)} & \textbf{50.65\%} &	\textbf{99.91\%} &	\textbf{56.98\%} &	\textbf{35.05\%} &	\textbf{22.84\%} &	\textbf{\phantom{0}7.77\%} &	\textbf{\phantom{0}1.36\%} &	\textbf{\phantom{0}1.14\%} &	\textbf{\phantom{0}1.49\%} &	\textbf{\phantom{0}1.54\%} \\
    		\textbf{(10,10,200)} & \textbf{53.30\%} &	\textbf{99.95\%} &	\textbf{53.77\%} &	\textbf{38.02\%} &	\textbf{26.06\%} &	\textbf{17.45\%} &	\textbf{\phantom{0}7.19\%} &	\textbf{\phantom{0}1.05\%} &	\textbf{\phantom{0}0.26\%} &	\textbf{\phantom{0}0.28\%} \\
    		\textbf{(10,10,300)} & \textbf{54.23\%} &	\textbf{99.97\%} &	\textbf{53.50\%} &	\textbf{39.77\%} &	\textbf{26.75\%} &	\textbf{20.75\%} &	\textbf{17.99\%} &	\textbf{\phantom{0}7.47\%} &	\textbf{\phantom{0}1.43\%} &	\textbf{\phantom{0}0.61\%} \\
    		(10,20,\phantom{0}50) & 48.77\% &	99.92\% &	57.55\% &	24.16\% &	16.96\% &	11.23\% &	11.46\% &	11.36\% &	11.36\% &	11.36\% \\
    		(10,30,\phantom{0}50) & 50.05\% &	99.94\% &	57.02\% &	38.97\% &	33.06\% &	31.63\% &	31.42\% &	31.42\% &	31.42\% &	31.42\% \\
    		(10,40,\phantom{0}50) & 50.32\% &	99.96\% &	62.98\% &	45.03\% &	44.05\% &	43.36\% &	43.36\% &	43.36\% &	43.36\% &	43.36\% \\
    		(10,40,100) & 54.98\% &	99.98\% &	80.55\% &	45.61\% &	35.69\% &	27.61\% &	25.77\% &	24.92\% &	25.41\% &	25.50\% \\
    		(25,10,\phantom{0}50) & 37.28\% &	99.87\% &	26.68\% &	11.18\% &	\phantom{0}5.86\% &	\phantom{0}5.86\% &	\phantom{0}5.86\% &	\phantom{0}5.86\% &	\phantom{0}5.86\% &	\phantom{0}5.86\% \\
    		\textbf{(25,10,100)} & \textbf{44.57\%} &	\textbf{99.92\%} &	\textbf{40.47\%} &	\textbf{30.85\%} &	\textbf{14.51\%} &	\textbf{\phantom{0}4.40\%} &	\textbf{\phantom{0}3.70\%} &	\textbf{\phantom{0}3.70\%} &	\textbf{\phantom{0}3.70\%} &	\textbf{\phantom{0}3.70\%} \\
    		\textbf{(25,10,200)} & \textbf{49.75\%} &	\textbf{99.96\%} &	\textbf{47.23\%} &	\textbf{37.99\%} &	\textbf{29.72\%} &	\textbf{13.19\%} &	\textbf{\phantom{0}3.71\%} &	\textbf{\phantom{0}1.59\%} &	\textbf{\phantom{0}1.36\%} &	\textbf{\phantom{0}1.68\%} \\
    		\textbf{(25,10,300)} & \textbf{51.73\%} &	\textbf{99.97\%} &	\textbf{56.84\%} &	\textbf{48.89\%} &	\textbf{34.16\%} &	\textbf{24.02\%} &	\textbf{\phantom{0}8.05\%} &	\textbf{\phantom{0}1.77\%} &	\textbf{\phantom{0}1.03\%} &	\textbf{\phantom{0}1.17\%} \\
    		(25,20,\phantom{0}50) & 39.01\% &	99.93\% &	34.14\% &	11.26\% &	\phantom{0}6.44\% &	\phantom{0}6.44\% &	\phantom{0}6.44\% &	\phantom{0}6.44\% &	\phantom{0}6.44\% &	\phantom{0}6.44\% \\
    		(25,20,100) & 47.02\% &	99.96\% &	52.18\% &	26.21\% &	10.38\% &	\phantom{0}3.34\% &	\phantom{0}3.57\% &	\phantom{0}3.57\% &	\phantom{0}3.57\% &	\phantom{0}3.57\% \\
    		\textbf{(25,20,200)} & \textbf{52.44\%} &	\textbf{99.98\%} &	\textbf{62.38\%} &	\textbf{38.30\%} &	\textbf{24.36\%} &	\textbf{\phantom{0}7.41\%} &	\textbf{\phantom{0}0.93\%} &	\textbf{\phantom{0}0.91\%} &	\textbf{\phantom{0}1.06\%} &	\textbf{\phantom{0}1.25\%} \\
    		\textbf{(25,20,300)} & \textbf{54.69\%} &	\textbf{99.98\%} &	\textbf{62.72\%} &	\textbf{42.19\%} &	\textbf{30.72\%} &	\textbf{22.26\%} &	\textbf{\phantom{0}4.78\%} &	\textbf{\phantom{0}0.56\%} &	\textbf{\phantom{0}0.53\%} &	\textbf{\phantom{0}0.62\%} \\
    		(25,30,\phantom{0}50) & 40.04\% &	99.96\% &	41.02\% &	23.72\% &	22.94\% &	22.94\% &	22.94\% &	22.94\% &	22.94\% &	22.94\% \\
    		(25,30,100) & 47.84\% &	99.97\% &	64.22\% &	32.04\% &	16.22\% &	\phantom{0}7.41\% &	\phantom{0}7.57\% &	\phantom{0}7.57\% &	\phantom{0}7.57\% &	\phantom{0}7.57\% \\
    		(25,30,200) & 53.39\% &	99.99\% &	73.83\% &	43.35\% &	30.70\% &	15.59\% &	\phantom{0}6.42\% &	\phantom{0}5.51\% &	\phantom{0}5.67\% &	\phantom{0}5.89\% \\
    		\textbf{(25,30,300)} & \textbf{55.68\%} &	\textbf{99.99\%} &	\textbf{78.99\%} &	\textbf{48.56\%} &	\textbf{41.44\%} &	\textbf{26.72\%} &	\textbf{\phantom{0}8.59\%} &	\textbf{\phantom{0}4.23\%} &	\textbf{\phantom{0}3.58\%} &	\textbf{\phantom{0}3.27\%} \\
    		(25,40,\phantom{0}50) & 40.26\% &	99.97\% &	50.78\% &	39.69\% &	39.52\% &	39.52\% &	39.52\% &	39.52\% &	39.52\% &	39.52\% \\
    		(25,40,100) & 48.39\% &	99.98\% &	61.63\% &	29.76\% &	20.09\% &	17.55\% &	17.17\% &	17.17\% &	17.17\% &	17.17\% \\
    		\bottomrule
    	\end{tabular}
    \end{table}

    In Table \ref{Table:Comparison_OptFace}, we report the relative normalized degeneracy level of the optimal dual solution computed by Gurobi for LP relaxations of different formulations for MKAP. The relative degeneracy level is calculated as the degeneracy level $n-\|\myalpha^*\|_0$ divided by the dimension $n$, i.e., $1-\|\myalpha^*\|_0/n$. Bold rows correspond to $I_2$ instances. 
    Due to numerical tolerances, the optimal values $\myalpha^*_k$ of some dual variables can sometimes be very close to but not equal to zero, especially the ones associated with bound constraints. We treat $\myalpha^*_k$ as 0 if $|\myalpha^*_k|\leq 10^{-8}$ for all entries $\myalpha^*_k$ of $\myalpha^*$ when computing $\|\myalpha^*\|_0$.
    Although the strengthening techniques in Section \ref{sec:Strengthening} do not guarantee a monotonic decrease in degeneracy level, it is clear from Table \ref{Table:Comparison_OptFace} that formulations with stronger strengthening tend to have less degenerate optimal dual solutions. We observe that the degeneracy level of {\DWB} is already comparable to {\MIP} on $I_2$ instances but is higher than {\MIP} on $I_1\setminus I_2$ instances. With coefficient strengthening, {\STR} has lower average dual degeneracy than {\MIP} in almost all cases except on the instance class (5,40,50). The dual degeneracy continues to decrease with further tilting. Finally, it is worth noting that, as expected, the relative degeneracy level of {\OBJ} is extremely high, close to 100\%.
	
	\section{Time Spent on Obtaining Different MKAP Formulations}\label{apd:time_gen}
	\begin{table}[hbt!]
		\scriptsize
		\centering
		\caption{Comparison of Time Spent on Obtaining Different Formulations for MKAP}
		\label{Table:Comparison_GenTime}
		\begin{tabular}{crrrrrrr}
			\toprule
			$(|K|,|M|,|N|)$ & \multicolumn{7}{c}{Generation Time Excluding Lagr. Dual Time (s)}\\
			\cmidrule(lr){2-8}
			& \STR & \DkT{1} & \DkT{2} & \DkT{3} & \DkT{4} & \DkT{5} & \DkT{6} \\
			\midrule\midrule\addlinespace
			(\phantom{0}2,20,\phantom{0}50) & 0.1 & 0.3 & 0.5 & 1.1 & 2.1 & 3.7 & 6.0\\
			(\phantom{0}2,30,\phantom{0}50) & 0.1 & 0.2 & 0.3 & 0.6 & 0.9 & 1.0 & 1.2\\
			(\phantom{0}2,40,\phantom{0}50) & 0.1 & 0.2 & 0.1 & 0.2 & 0.2 & 0.2 & 0.2\\
			(\phantom{0}5,10,\phantom{0}50) & 0.0 & 0.1 & 0.4 & 0.8 & 1.6 & 2.6 & 3.6\\
			(\phantom{0}5,20,\phantom{0}50) & 0.1 & 0.2 & 0.5 & 0.9 & 1.3 & 1.5 & 1.6\\
			(\phantom{0}5,30,\phantom{0}50) & 0.1 & 0.2 & 0.3 & 0.5 & 0.5 & 0.5 & 0.5\\
			(\phantom{0}5,40,\phantom{0}50) & 0.1 & 0.1 & 0.1 & 0.1 & 0.1 & 0.1 & 0.1\\
			(10,10,\phantom{0}50) & 0.0 & 0.2 & 0.4 & 0.8 & 1.1 & 1.2 & 1.2\\
			\textbf{(10,10,100)} & \textbf{0.1} & \textbf{0.3} & \textbf{1.0} & \textbf{2.2} & \textbf{4.5} & \textbf{8.7} & \textbf{15.7}\\
			\textbf{(10,10,200)} & \textbf{0.3} & \textbf{0.9} & \textbf{2.2} & \textbf{5.0} & \textbf{10.3} & \textbf{20.9} & \textbf{42.4}\\
			\textbf{(10,10,300)} & \textbf{0.9} & \textbf{2.9} & \textbf{7.0} & \textbf{15.2} & \textbf{30.5} & \textbf{60.0} & \textbf{118.7}\\
			(10,20,\phantom{0}50) & 0.1 & 0.2 & 0.4 & 0.6 & 0.7 & 0.7 & 0.7\\
			(10,30,\phantom{0}50) & 0.1 & 0.2 & 0.3 & 0.3 & 0.3 & 0.3 & 0.3\\
			(10,40,\phantom{0}50) & 0.1 & 0.1 & 0.1 & 0.1 & 0.1 & 0.1 & 0.1\\
			(10,40,100) & 0.3 & 0.9 & 2.0 & 3.8 & 5.5 & 6.6 & 6.9\\
			(25,10,\phantom{0}50) & 0.0 & 0.2 & 0.3 & 0.3 & 0.3 & 0.3 & 0.3\\
			\textbf{(25,10,100)} & \textbf{0.0} & \textbf{0.4} & \textbf{1.1} & \textbf{2.1} & \textbf{2.8} & \textbf{2.8} & \textbf{2.8}\\
			\textbf{(25,10,200)} & \textbf{0.1} & \textbf{0.7} & \textbf{2.1} & \textbf{4.9} & \textbf{10.3} & \textbf{20.2} & \textbf{38.7}\\
			\textbf{(25,10,300)} & \textbf{0.1} & \textbf{0.9} & \textbf{3.0} & \textbf{7.2} & \textbf{15.5} & \textbf{31.7} & \textbf{63.5}\\
			(25,20,\phantom{0}50) & 0.1 & 0.2 & 0.2 & 0.2 & 0.2 & 0.2 & 0.2\\
			(25,20,100) & 0.1 & 0.6 & 1.6 & 2.6 & 3.1 & 3.1 & 3.1\\
			\textbf{(25,20,200)} & \textbf{0.2} & \textbf{1.3} & \textbf{4.0} & \textbf{9.0} & \textbf{17.7} & \textbf{32.1} & \textbf{53.9}\\
			\textbf{(25,20,300)} & \textbf{0.4} & \textbf{2.0} & \textbf{5.8} & \textbf{13.5} & \textbf{28.1} & \textbf{56.1} & \textbf{109.5}\\
			(25,30,\phantom{0}50) & 0.1 & 0.2 & 0.2 & 0.2 & 0.2 & 0.2 & 0.2\\
			(25,30,100) & 0.2 & 0.8 & 1.7 & 2.4 & 2.5 & 2.5 & 2.5\\
			(25,30,200) & 0.4 & 1.9 & 5.4 & 11.8 & 21.9 & 36.1 & 51.1\\
			\textbf{(25,30,300)} & \textbf{0.8} & \textbf{3.1} & \textbf{8.3} & \textbf{19.3} & \textbf{39.7} & \textbf{77.9} & \textbf{148.1}\\
			(25,40,\phantom{0}50) & 0.1 & 0.1 & 0.0 & 0.0 & 0.0 & 0.0 & 0.0\\
			(25,40,100) & 0.2 & 0.8 & 1.5 & 1.9 & 2.0 & 2.0 & 2.0\\
			\bottomrule
		\end{tabular}
	\end{table}
    Since both {\OBJ} and {\DWB} can be obtained directly after solving the Lagrangian dual problem, we only report in Table \ref{Table:Comparison_GenTime} the total time spent on applying strengthening and different levels of tilting for {\STR} and \DkT{d} {} with $d\in\{1,\ldots,6\}$. Bold rows correspond to $I_2$ instances and other rows correspond to $I_1\setminus I_2$ MKAP instances. We note that, given a dual optimal solution, it is extremely efficient to generate the {\STR} formulation for MKAP while the time spent on generating \DkT{d} {} grows exponentially as $d$ increases.
	\section{Number of Cutting Planes Generated Using Different Formulations}\label{apd:nCuts}
	In Table \ref{Table:nCuts}, we report the number of cutting planes generated by Gurobi within 10 minutes for formulations {\MIP}, {\OBJ}, {\DWB}, {\STR}, {\DkT{3} } and {\DkT{6} } averaged over the (25, 20, 300) MKAP instances.
	\begin{table}[hbt!]
		\centering
		\caption{Average Number of Cutting Planes Generated by Gurobi}
		\label{Table:nCuts}
		\scriptsize
		\begin{tabular}{crrrrrr}
			\toprule
			& {\MIP} & {\OBJ} & {\DWB} & {\STR} & {\DkT{3} } & {\DkT{6} } \\
			\cmidrule{2-7}
			Gomory & 132.1 & 41.5 & 10.4 & 17.9 & 0.4 & 0.0\\
			Lift-and-project & 5.3 & 1.7 & 0.6 & 0.6 & 0.0 & 0.0\\
			Cover & 616.1 & 176.5 & 121.5 & 173.3 & 16.3 & 0.0\\
			Clique & 362.3 & 248.5 & 148.3 & 266.4 & 108.3 & 0.0\\
			MIR & 90.2 & 24.6 & 9.7 & 13.2 & 3.1 & 0.0\\
			StrongCG & 94.3 & 49.8 & 12.7 & 21.9 & 2.4 & 0.0\\
			Flow cover & 228.8 & 61.6 & 18.5 & 3.1 & 0.0 & 0.0\\
			Zero half & 34.9 & 13.9 & 2.0 & 0.4 & 0.0 & 0.0\\
			RLT & 19.7 & 0.8 & 1.4 & 0.7 & 0.0 & 0.0\\
			Relax-and-lift & 10.0 & 0.0 & 0.1 & 0.0 & 0.0 & 0.0\\
			\bottomrule
		\end{tabular}
	\end{table}
	
	\section{Results for $I_1\setminus I_2$ MKAP instances}\label{apd:bnc_I2notI1}
	\begin{table}[hbt!]
		\centering
		\scriptsize
		\caption{Comparison of Gurobi Performance on $I_1\setminus I_2$ MKAP Instances}
		\label{Table:BnC_I1}
		\begin{tabular}{crrrrrrrrrrrr}
			\toprule
			$(|K|,|M|,|N|)$ & \multicolumn{6}{c}{Number of Solved} & \multicolumn{6}{c}{Average Solution Time (s)}\\
			\cmidrule(lr){2-7}\cmidrule(lr){8-13}
			& {\MIP} & {\OBJ} & {\DWB} & {\STR} & {\DkT{3} } & {\DkT{6} } & {\MIP} & {\OBJ} & {\DWB} & {\STR} & {\DkT{3} } & {\DkT{6} }\\
			\midrule\midrule\addlinespace
			(\phantom{0}2,20,\phantom{0}50) & \textbf{30}/30 & 23/30 & \textbf{30}/30 & \textbf{30}/30 & \textbf{30}/30 & \textbf{30}/30 & \textbf{2} & $\geq$202 & 7 & 5 & 4 & 4\\
			(\phantom{0}2,30,\phantom{0}50) & \textbf{30}/30 & 29/30 & \textbf{30}/30 & \textbf{30}/30 & \textbf{30}/30 & \textbf{30}/30 & \textbf{0} & $\geq$\phantom{0}25 & \textbf{0} & \textbf{0} & \textbf{0} & \textbf{0}\\
			(\phantom{0}2,40,\phantom{0}50) & \textbf{30}/30 & \textbf{30}/30 & \textbf{30}/30 & \textbf{30}/30 & \textbf{30}/30 & \textbf{30}/30 & \textbf{0} & \textbf{0} & \textbf{0} & \textbf{0} & \textbf{0} & \textbf{0}\\
			(\phantom{0}5,10,\phantom{0}50) & \textbf{30}/30 & \textbf{30}/30 & \textbf{30}/30 & \textbf{30}/30 & \textbf{30}/30 & \textbf{30}/30 & 4 & 62 & 7 & 4 & 2 & \textbf{1}\\
			(\phantom{0}5,20,\phantom{0}50) & \textbf{30}/30 & \textbf{30}/30 & \textbf{30}/30 & \textbf{30}/30 & \textbf{30}/30 & \textbf{30}/30 & 1 & 42 & 4 & 2 & 1 & \textbf{0}\\
			(\phantom{0}5,30,\phantom{0}50) & \textbf{30}/30 & \textbf{30}/30 & \textbf{30}/30 & \textbf{30}/30 & \textbf{30}/30 & \textbf{30}/30 & \textbf{0} & 19 & \textbf{0} & \textbf{0} & \textbf{0} & \textbf{0}\\
			(\phantom{0}5,40,\phantom{0}50) & \textbf{30}/30 & \textbf{30}/30 & \textbf{30}/30 & \textbf{30}/30 & \textbf{30}/30 & \textbf{30}/30 & \textbf{0} & \textbf{0} & \textbf{0} & \textbf{0} & \textbf{0} & \textbf{0}\\
			(10,10,\phantom{0}50) & \textbf{30}/30 & \textbf{30}/30 & \textbf{30}/30 & \textbf{30}/30 & \textbf{30}/30 & \textbf{30}/30 & \textbf{0} & 1 & \textbf{0} & \textbf{0} & \textbf{0} & \textbf{0}\\
			(10,20,\phantom{0}50) & \textbf{30}/30 & \textbf{30}/30 & \textbf{30}/30 & \textbf{30}/30 & \textbf{30}/30 & \textbf{30}/30 & \textbf{0} & 16 & \textbf{0} & \textbf{0} & \textbf{0} & \textbf{0}\\
			(10,30,\phantom{0}50) & \textbf{30}/30 & \textbf{30}/30 & \textbf{30}/30 & \textbf{30}/30 & \textbf{30}/30 & \textbf{30}/30 & \textbf{0} & 1 & \textbf{0} & \textbf{0} & \textbf{0} & \textbf{0}\\
			(10,40,\phantom{0}50) & \textbf{30}/30 & \textbf{30}/30 & \textbf{30}/30 & \textbf{30}/30 & \textbf{30}/30 & \textbf{30}/30 & \textbf{0} & \textbf{0} & \textbf{0} & \textbf{0} & \textbf{0} & \textbf{0}\\
			(10,40,100) & \textbf{30}/30 & 1/30 & 29/30 & \textbf{30}/30 & \textbf{30}/30 & \textbf{30}/30 & 40 & $\geq$600 & $\geq$103 & 72 & 15 & \textbf{12}\\
			(25,10,\phantom{0}50) & \textbf{30}/30 & \textbf{30}/30 & \textbf{30}/30 & \textbf{30}/30 & \textbf{30}/30 & \textbf{30}/30 & \textbf{0} & \textbf{0} & \textbf{0} & \textbf{0} & \textbf{0} & \textbf{0}\\
			(25,20,\phantom{0}50) & \textbf{30}/30 & \textbf{30}/30 & \textbf{30}/30 & \textbf{30}/30 & \textbf{30}/30 & \textbf{30}/30 & \textbf{0} & \textbf{0} & \textbf{0} & \textbf{0} & \textbf{0} & \textbf{0}\\
			(25,20,100) & \textbf{30}/30 & 20/30 & \textbf{30}/30 & \textbf{30}/30 & \textbf{30}/30 & \textbf{30}/30 & 1 & $\geq$204 & \textbf{0} & \textbf{0} & \textbf{0} & \textbf{0}\\
			(25,30,\phantom{0}50) & \textbf{30}/30 & \textbf{30}/30 & \textbf{30}/30 & \textbf{30}/30 & \textbf{30}/30 & \textbf{30}/30 & \textbf{0} & \textbf{0} & \textbf{0} & \textbf{0} & \textbf{0} & \textbf{0}\\
			(25,30,100) & \textbf{30}/30 & 10/30 & \textbf{30}/30 & \textbf{30}/30 & \textbf{30}/30 & \textbf{30}/30 & 5 & $\geq$432 & 1 & 1 & \textbf{0} & \textbf{0}\\
			(25,30,200) & 16/30 & 1/30 & 10/30 & 15/30 & 23/30 & \textbf{26}/30 & $\geq$377 & $\geq$600 & $\geq$475 & $\geq$402 & $\geq$255 & $\geq$143\\
			(25,40,\phantom{0}50) & \textbf{30}/30 & \textbf{30}/30 & \textbf{30}/30 & \textbf{30}/30 & \textbf{30}/30 & \textbf{30}/30 & \textbf{0} & \textbf{0} & \textbf{0} & \textbf{0} & \textbf{0} & \textbf{0}\\
			(25,40,100) & \textbf{30}/30 & 9/30 & \textbf{30}/30 & \textbf{30}/30 & \textbf{30}/30 & \textbf{30}/30 & 2 & $\geq$464 & 1 & 1 & \textbf{0} & \textbf{0}\\
			\bottomrule
			\multicolumn{7}{l}{At each row, the averages of 30 instances are reported.}
		\end{tabular}
	\end{table}
	\begin{table}[hbt!]
		\centering
		\scriptsize
		\caption{Comparison of Gurobi Performance on $I_1\setminus I_2$ MKAP Instances (Cont'd)}
		\label{Table:BnC_I1_condt}
		\begin{tabular}{crrrrrrrrrrrr}
			\toprule
			$(|K|,|M|,|N|)$ & \multicolumn{6}{c}{Average Optimality Gap (\%)} & \multicolumn{6}{c}{Average Number of Nodes}\\
			\cmidrule(lr){2-7}\cmidrule(lr){8-13}
			& {\MIP} & {\OBJ} & {\DWB} & {\STR} & {\DkT{3} } & {\DkT{6} } & {\MIP} & {\OBJ} & {\DWB} & {\STR} & {\DkT{3} } & {\DkT{6} }\\
			\midrule\midrule\addlinespace
			(\phantom{0}2,20,\phantom{0}50) & \textbf{0.00} & 0.06 & \textbf{0.00} & \textbf{0.00} & \textbf{0.00} & \textbf{0.00} & \textbf{3439} & $\geq$755510 & 28880 & 16808 & 16826 & 7963\\
			(\phantom{0}2,30,\phantom{0}50) & \textbf{0.00} & \textbf{0.00} & \textbf{0.00} & \textbf{0.00} & \textbf{0.00} & \textbf{0.00} & \textbf{89} & $\geq$177782 & 341 & 796 & 188 & 131\\
			(\phantom{0}2,40,\phantom{0}50) & \textbf{0.00} & \textbf{0.00} & \textbf{0.00} & \textbf{0.00} & \textbf{0.00} & \textbf{0.00} & \textbf{1} & \textbf{1} & \textbf{1} & \textbf{1} & \textbf{1} & \textbf{1}\\
			(\phantom{0}5,10,\phantom{0}50) & \textbf{0.00} & \textbf{0.00} & \textbf{0.00} & \textbf{0.00} & \textbf{0.00} & \textbf{0.00} & 12677 & 669382 & 62160 & 23721 & 5498 & \textbf{1509}\\
			(\phantom{0}5,20,\phantom{0}50) & \textbf{0.00} & \textbf{0.00} & \textbf{0.00} & \textbf{0.00} & \textbf{0.00} & \textbf{0.00} & 1865 & 334795 & 20703 & 9992 & 929 & \textbf{614}\\
			(\phantom{0}5,30,\phantom{0}50) & \textbf{0.00} & \textbf{0.00} & \textbf{0.00} & \textbf{0.00} & \textbf{0.00} & \textbf{0.00} & 275 & 151202 & 161 & 61 & \textbf{17} & 18\\
			(\phantom{0}5,40,\phantom{0}50) & \textbf{0.00} & \textbf{0.00} & \textbf{0.00} & \textbf{0.00} & \textbf{0.00} & \textbf{0.00} & \textbf{1} & \textbf{1} & \textbf{1} & \textbf{1} & \textbf{1} & \textbf{1}\\
			(10,10,\phantom{0}50) & \textbf{0.00} & \textbf{0.00} & \textbf{0.00} & \textbf{0.00} & \textbf{0.00} & \textbf{0.00} & 1077 & 3227 & 39 & 51 & \textbf{1} & \textbf{1}\\
			(10,20,\phantom{0}50) & \textbf{0.00} & \textbf{0.00} & \textbf{0.00} & \textbf{0.00} & \textbf{0.00} & \textbf{0.00} & 344 & 46614 & 803 & 379 & \textbf{14} & \textbf{14}\\
			(10,30,\phantom{0}50) & \textbf{0.00} & \textbf{0.00} & \textbf{0.00} & \textbf{0.00} & \textbf{0.00} & \textbf{0.00} & 131 & 3045 & 372 & 129 & \textbf{1} & \textbf{1}\\
			(10,40,\phantom{0}50) & \textbf{0.00} & \textbf{0.00} & \textbf{0.00} & \textbf{0.00} & \textbf{0.00} & \textbf{0.00} & \textbf{1} & \textbf{1} & \textbf{1} & \textbf{1} & \textbf{1} & \textbf{1}\\
			(10,40,100) & \textbf{0.00} & 0.14 & 0.01 & 0.01 & \textbf{0.00} & \textbf{0.00} & 23261 & $\geq$302450 & $\geq$137206 & 82878 & 11574 & \textbf{9120}\\
			(25,10,\phantom{0}50) & \textbf{0.00} & \textbf{0.00} & \textbf{0.00} & \textbf{0.00} & \textbf{0.00} & \textbf{0.00} & 2 & 229 & \textbf{1} & \textbf{1} & \textbf{1} & \textbf{1}\\
			(25,20,\phantom{0}50) & \textbf{0.00} & \textbf{0.00} & \textbf{0.00} & \textbf{0.00} & \textbf{0.00} & \textbf{0.00} & \textbf{1} & 212 & \textbf{1} & \textbf{1} & \textbf{1} & \textbf{1}\\
			(25,20,100) & \textbf{0.00} & 0.03 & \textbf{0.00} & \textbf{0.00} & \textbf{0.00} & \textbf{0.00} & 279 & $\geq$202331 & \textbf{1} & \textbf{1} & \textbf{1} & \textbf{1}\\
			(25,30,\phantom{0}50) & \textbf{0.00} & \textbf{0.00} & \textbf{0.00} & \textbf{0.00} & \textbf{0.00} & \textbf{0.00} & \textbf{1} & 2 & \textbf{1} & \textbf{1} & \textbf{1} & \textbf{1}\\
			(25,30,100) & \textbf{0.00} & 0.15 & \textbf{0.00} & \textbf{0.00} & \textbf{0.00} & \textbf{0.00} & 9866 & $\geq$185765 & 1903 & 1121 & \textbf{1} & \textbf{1}\\
			(25,30,200) & 0.12 & 0.44 & 0.22 & 0.15 & 0.07 & \textbf{0.04} & $\geq$45681 & $\geq$341784 & $\geq$\phantom{0}67369 & $\geq$48540 & $\geq$20550 & $\geq$14553\\
			(25,40,\phantom{0}50) & \textbf{0.00} & \textbf{0.00} & \textbf{0.00} & \textbf{0.00} & \textbf{0.00} & \textbf{0.00} & \textbf{1} & 28 & \textbf{1} & \textbf{1} & \textbf{1} & \textbf{1}\\
			(25,40,100) & \textbf{0.00} & 0.08 & \textbf{0.00} & \textbf{0.00} & \textbf{0.00} & \textbf{0.00} & 450 & $\geq$257189 & 1520 & 2483 & \textbf{47} & \textbf{47}\\
			\bottomrule
			\multicolumn{7}{l}{At each row, the averages of 30 instances are reported.}
		\end{tabular}
	\end{table}
    In Tables \ref{Table:BnC_I1} and \ref{Table:BnC_I1_condt}, we report computational results for $I_1\setminus I_2$ MKAP instances. We observe that the original {\MIP} formulation is very competitive on these instances, whose performance is often better than {\DWB} and even {\STR}. This can be explained by the fact that although the natural LP relaxation is weak ($r_L$ is large), Gurobi can close much of the gap by presolve and cutting planes at the root node ($r_R$ is small).

\section{Time Spent on Obtaining Different TKP Formulations}\label{apd:time_gen_TKP}
\begin{table}[hbt!]
    \centering
    \scriptsize
    \caption{Comparison of Time Spent on Obtaining Different Formulations for TKP}
    \label{Table:Comparison_GenTime_TKP}
    \begin{tabular}{crrrr}
        \toprule
        Subclass & \multicolumn{4}{c}{Generation Time (s)}\\
        \cmidrule(lr){2-5}
        & {\OBJ}/{\DWB-32} & {\STR-32} & {\OBJ}/{\DWB-64} & {\STR-64}\\
        \midrule\midrule\addlinespace
        XVIII & 78 & 204 & 83 & 772\\
        XIX & 80 & 218 & 82 & 643\\
        XX & 106 & 267 & 101 & 793\\
        \midrule\midrule\addlinespace
    \end{tabular}
\end{table}
In Table \ref{Table:Comparison_GenTime_TKP}, we report time spent on generating formulations {\OBJ}/{\DWB}-$B$ and {\STR}-$B$ for $B\in\{32,64\}$. Unlike Table \ref{Table:Comparison_GenTime} for MKAP, time reported in Table \ref{Table:Comparison_GenTime_TKP} includes time spent on solving the Lagrangian dual problem.

\end{document}